\def\mR{\mathbb{R}}
\def\E{\mathbb{E}}
\def\tr{\mbox{tr}}
\def\var{\mbox{var}}
\newcommand{\bSig}{\mbox{\boldmath $\Sigma$}}
\newcommand{\bmu}{\mbox{\boldmath $\mu$}}
\def\U{{\bm{U}}}
\def\X{\bm{X}}
\def\x{\bm{x}}
\def\Y{\bm{Y}}
\def\Z{\bm {Z}}
\def\A{\bm {A}}
\def\B{\bm{ B}}
\def\C{\bm {C}}
\def\bI{\bm{I}}
\def\Sig{\bm{\Sigma}}
\newcommand{\trans}{^\top}
\def\defby{\stackrel{\mbox{\textrm{\tiny def}}}{=}}
\def\topr{\,{\buildrel p \over \longrightarrow}\,}
\def\tod{\,{\buildrel d \over \longrightarrow}\,}
\theoremstyle{plain}
\newtheorem{theorem}{Theorem}[section]
\newtheorem{lemma}[theorem]{Lemma}
\newtheorem{assum}{Assumption}[section]
\theoremstyle{definition}
\newtheorem{example}{Example}[section]
\theoremstyle{remark}
\newtheorem{remark}{Remark}
\begin{document}


\title{Statistical inference on kurtosis of elliptical distributions}

\author{
\name{Bowen Zhou, Peirong Xu and Cheng Wang\thanks{CONTACT Cheng Wang. Email: chengwang@sjtu.edu.cn}}
\affil{School of Mathematical Science, Shanghai Jiao Tong University, Shanghai, 20040,  China}
}

\maketitle

\begin{abstract}
Multivariate elliptically-contoured distributions are widely used for modeling correlated and non-Gaussian data. In this work, we study the kurtosis of the elliptical model, which is an important parameter in many statistical analysis. Based on U-statistics, we develop an estimation method. Theoretically, we show that the proposed estimator is consistent under regular conditions, especially we relax a moment condition and the restriction that the data dimension and the sample size are of the same order. Furthermore, we derive the asymptotic normality of the estimator and evaluate the asymptotic variance through several examples, which allows us to construct a confidence interval. The performance of our method is validated by extensive simulations and real data analysis. 
\end{abstract}

\begin{keywords}
Elliptical distribution; kurtosis; high-dimensional data; U-statistics
\end{keywords}

\section{Introduction}
The multivariate normal distribution plays a central role in multivariate statistical analysis but is often violated in practical applications. For instance, in finance, while stock returns are symmetric, they exhibit leptokurtosis\citep{vcivzek2011statistical, mcneil2015quantitative}. In genomics and bioimaging, empirical evidence suggests that the Gaussian assumption may not hold\citep{thomas2010validation, posekany2011biological}. As a natural extension, elliptical distributions have received considerable attention in the past few decades \citep{fang1990generalized,gupta2013elliptically}. This class of distributions retains many desirable properties of the multivariate normal distribution, such as symmetry, while also enabling the modeling of non-normal dependence and multivariate extremes, making them valuable in various applications. In particular, many methods in high-dimensional data analysis have been motivated by the study of elliptical distributions \citep{han2012transelliptical, fan2015quadro, fan2018large}.

A random vector $\X \in \mR^p$ follows an elliptical distribution if it can be represented in the form
\begin{align}\label{eq:model}
	\X=\bmu+\xi\bSig^{\frac{1}{2}}\U,
\end{align}
where $\bmu \in \mR^p$ is a $p$-dimensional constant vector, $\bSig \in \mR^{p \times p}$ is a $p\times p$ positive definite matrix, $\U \in \mR^{p}$ is a random vector uniformly distributed on the unit sphere $\mathcal{S}^{p-1}$, and $\xi \in [0, \infty)$ is a random variable that independent of $\U$. Given different distributions of $\xi$, the class of elliptical distributions encompasses several important distribution families. For instance, it is reduced to the multivariate normal distribution when $\xi^2$ follows a chi-square distribution. If $\xi^2$ follows an F-distribution, it results in the multivariate $t$ distribution. In general, selecting an appropriate distribution for $\xi$ is crucial but challenging for real data analysis, often leveraging the moment information of $\xi$. In particular, the kurtosis parameter \citep{ke2018higher}
\begin{align*}
	\theta \defby \frac{\E\xi^4}{p(p+2)}
\end{align*}
is widely discussed in the literature. For example, \cite{fan2015quadro} demonstrated that an estimator of 
$\theta$ is essential for constructing a general quadratic classifier under the assumption that data are generated from elliptical distributions. In the context of financial assets, the leptokurtosis, which equals $\theta-1$, is often used to capture the tail behavior of stock returns \citep{ke2018higher}. In random matrix theory, \cite{hu2019high} utilized the established asymptotic properties for the spherical test of the covariance matrix, which requires an estimator of 
$\theta$ to construct the test statistic.  \cite{Wang2023boot} further proposed the parametric bootstrapping method for inference with spectral statistics in high-dimensional elliptical models, requiring a consistent estimator of $\var(\xi^2/p)=(p+2)\theta/p-1$. Therefore, constructing a theoretically justified estimator for $\theta$ is crucial for facilitating statistical inference in different high-dimensional settings.

For the classical setting where the data dimension of $\X$ is much smaller than the sample size, \cite{seo1996estimation} proposed a moment estimator by noting $\xi^2=(\X-\bmu)\trans\bSig^{-1}(\X-\bmu)$, where $\Sigma$ can be well estimated by the sample covariance matrix. It performs poorly in the high-dimensional setting due to noise accumulation of the sample covariance matrix. To address this problem, \cite{fan2015quadro} proposed a moment estimator using the shrinkage estimator of the precision matrix $\Sigma^{-1}$. The consistency of this estimator requires a sparsity condition on $\Sigma^{-1}$ and is restricted to the sub-Gaussian family. \cite{ke2018higher} proposed an estimator of $\theta$ based on the coordinates of the data, avoiding the estimation of $\Sigma^{-1}$ but neglecting the within-coordinate correlation. The estimator is consistent in ultra-high dimensional cases, but its asymptotic normality holds only when the data are from a sub-Gaussian distribution. \cite{Wang2023boot} studied the estimation of $\var(\xi^2/p)=(p+2)\theta/p-1$ directly by constructing a quadratic-form estimating equation. The estimator is consistent when $p$ is of the same order as the sample size $n$, but its asymptotic normality remains unknown.

We propose to estimate the kurtosis by representing it as a function of U-statistics. Our work contributes to the existing literature in several theoretical aspects: (1) both the consistency and the asymptotic normality of the estimator hold for data collected from general elliptical distributions; (2) the estimator achieves a faster convergence rate than that in \cite{ke2018higher} even when $p$ is larger than $n$; and (3) the asymptotic normality holds for both light-tailed and heavy-tailed cases, enabling the construction of valid confidence intervals for different distribution families.

The rest of the paper is organized as follows. We propose an model-free estimator of $\theta$ for high-dimensional settings in Section \ref{sec2-1}, and study its theoretical properties in Section \ref{sec2-2}. In Section \ref{sec2:exm}, we discuss the asymptotic normality of the estimator and construct confidence intervals of $\theta$ under several commonly used distribution families. We investigate the finite sample performance of our method with other competing methods in Section \ref{sec3}. In Section \ref{sec4},  we demonstrate the effectiveness of the proposed method through several real data applications. All technical proofs are provided in the Appendix \ref{sec:a}.

\section{Main results} \label{sec2}

\subsection{Methodology}\label{sec2-1}
Suppose $\X \in \mR^p$ follows an elliptical distribution (\ref{eq:model}), where assuming $\E\xi^2=p$ for model identifiability. Then, we know that
\begin{align*}
	\var(\|\X-\bmu\|_2^2)=(\theta-1)\tr^2\bSig+2\theta\tr\bSig^2,
\end{align*}
which induces a moment equation
\begin{align}
	\theta=\frac{\var(\|\X-\bmu\|_2^2)+\tr^2\bSig}{\tr^2\bSig+2\tr\bSig^2}.\label{eq:mom}
\end{align}
This implies that an empirical estimate of $\theta$ can be derived via plug-in technique by estimating $\var(\|\X-\bmu\|_2^2)$, $\tr^2\bSig$ and $\tr\bSig^2$, respectively.

From the perspective of a $U$-statistic, we have
\begin{align*}
	\var(\|\X-\bmu\|_2^2)=&\frac{1}{4}\E\left(\|\X_1-\X_2\|_2^2-\|\X_3-\X_4\|_2^2\right)^2-2\tr\bSig^2,\\
	\tr^2\bSig=&\frac{1}{4}\E\|\X_1-\X_2\|_2^2\|\X_3-\X_4\|_2^2,\\
	\tr\bSig^2=&\frac{1}{4}\E\left((\X_1-\X_2)\trans(\X_3-\X_4)\right)^2,
\end{align*}
where $\X_1, \X_2\cdots, \X_4$ are i.i.d. copies of $\X$. Therefore, based on i.i.d. samples $\X_1,\ldots,\X_n \in \mR^p$ from the elliptical distribution \eqref{eq:model}, empirical estimates of $\var(\|\X-\bmu\|_2^2)$, $\tr^2\bSig$ and $\tr\bSig^2$ would be $T_1-2T_3, T_2, T_3$, respectively, where
\begin{align*}
	&T_1=\frac{1}{4n(n-1)(n-2)(n-3)}\sum_{i\neq j\neq k\neq l}\left(\|\X_i-\X_j\|_2^2-\|\X_k-\X_l\|_2^2\right)^2,\\
	&T_2=\frac{1}{4n(n-1)(n-2)(n-3)}\sum_{i\neq j\neq k\neq l}\|\X_i-\X_j\|_2^2\|\X_k-\X_l\|_2^2,\\
	&T_3=\frac{1}{4n(n-1)(n-2)(n-3)}\sum_{i\neq j\neq k\neq l}\left((\X_i-\X_j)\trans(\X_k-\X_l)\right)^2.
\end{align*}
The estimator of $\theta$ can be constructed as
\begin{align*}
	\hat\theta_{n}=\frac{T_1+T_2-2T_3}{T_2+2T_3}.
\end{align*}

\subsection{Consistency}\label{sec2-2}

In this subsection, we establish the consistency and asymptotic normality of the proposed estimator $\theta_n$. We first make the following assumptions:
\begin{assum}\label{assum1}
	As $n\to\infty$, $p=p(n)\to\infty$,  
  $\tr\bSig^4\to \infty$ and   $\tr\bSig^4/\tr^2\bSig^2\to 0$.
\end{assum}
\begin{assum}\label{assum2}
	$\E\xi^8=O(p^4)$.	
\end{assum}
Assumption \ref{assum1} is commonly used in the high-dimensional setting\citep{chen2010tests,chen2010two,guo2016tests}, which is required to control the high-order terms in applying Hoeffding decomposition for U-statistics. If all the eigenvalues of $\bSig$ are bounded, Assumption \ref{assum1} holds naturally. Assumption \ref{assum2} restricts the moments of $\xi^2/p$ up to 4 at the constant scale. It is weaker than that in \cite{Wang2023boot} involving $4+\varepsilon$ moment of $\xi^2/p$ with some $\varepsilon>0$.

\begin{theorem}\label{thm1}
	Under Assumptions \ref{assum1} and \ref{assum2}, we have
	\begin{align*}
		\hat\theta_{n}-\theta\topr0.
	\end{align*}
\end{theorem}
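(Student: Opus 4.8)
The plan is to prove consistency of the plug-in estimator $\hat\theta_n = (T_1+T_2-2T_3)/(T_2+2T_3)$ by showing that the three $U$-statistics $T_1$, $T_2$, $T_3$, after suitable normalization, converge in probability to their population targets $\var(\|\X-\bmu\|_2^2)+2\tr\bSig^2$, $\tr^2\bSig$, and $\tr\bSig^2$, and then invoking the continuous mapping theorem. Since all quantities here diverge with $p$, the right normalization is by $\tr^2\bSig^2$ (or equivalently by $(\tr\bSig)^2 + \tr\bSig^2$-type quantities): I would work with $T_1/\tr^2\bSig^2$ and similar ratios, noting that by the moment identity \eqref{eq:mom} the target ratio $\theta$ is already scale-free. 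So it suffices to show, for each $m\in\{1,2,3\}$, that $\E T_m$ equals the intended population functional (which holds by construction, since each $T_m$ is an unbiased $U$-statistic for the expectations displayed before the theorem) and that $\var(T_m) = o\big((\text{relevant scaling})^2\big)$; then Chebyshev gives convergence of each ratio, and because the denominator $T_2+2T_3$ converges to $\tr^2\bSig+2\tr\bSig^2$ which is bounded away from $0$ after normalization, Slutsky/continuous mapping finishes the argument.

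The core of the work is the variance bound. For a $U$-statistic of degree $4$, the Hoeffding decomposition gives $\var(T_m) = \sum_{c=1}^{4}\binom{4}{c}\binom{n-4}{4-c}\binom{n}{4}^{-1}\zeta_c$, where $\zeta_c$ is the variance of the $c$-th order projection of the kernel; the leading $c=1$ term is of order $n^{-1}\zeta_1$ and the higher-order terms are $O(n^{-2})\zeta_2$, etc. First I would compute the relevant kernel projections. For instance, for $T_3$ the kernel is $h(\X_i,\X_j,\X_k,\X_l)=\frac14\big((\X_i-\X_j)\trans(\X_k-\X_l)\big)^2$ symmetrized over the four arguments; its first projection $g_1(\x)=\E[h\mid \X_1=\x]$ involves terms like $\x\trans\bSig\x$ and linear-in-$\bmu$ pieces, so $\zeta_1 = \var(g_1(\X))$ reduces to moments of quadratic forms in $\X$, which in turn reduce to traces of powers of $\bSig$ and moments of $\xi$. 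The key estimates I expect to need are of the form $\var(\x\trans\bSig\x) = O(\theta\,\tr\bSig^4 + (\theta-1)(\tr\bSig^2)^2)$ type bounds — more precisely, since $\X-\bmu=\xi\bSig^{1/2}\U$, one has $\|\X-\bmu\|_2^2$, $(\X-\bmu)\trans\bSig(\X-\bmu)$, etc. all expressible through $\xi$ and $\U$, and moments of quadratic forms in the uniform vector $\U$ are explicit. Assumption \ref{assum2} ($\E\xi^8=O(p^4)$) is exactly what is needed to keep these $\xi$-moments at the right scale, and Assumption \ref{assum1} ($\tr\bSig^4/\tr^2\bSig^2\to 0$) is what forces the $\zeta_1$-term, after dividing by $(\tr\bSig^2)^2$, to vanish. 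One must also handle the $\bmu$-dependence: although $\theta$ itself does not involve $\bmu$, the kernels do; the differences $\X_i-\X_j$ cancel $\bmu$, but cross terms in the squared kernels do not, so cross terms of the form $\bmu\trans\bSig\bmu$ must be tracked and shown to be controlled (or, using translation invariance of the $\X_i-\X_j$ form, one can reduce to the $\bmu=0$ case cleanly).

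The main obstacle, I expect, is the bookkeeping in bounding $\zeta_1$ (and checking the higher $\zeta_c$ are genuinely lower order) for $T_1$ and $T_3$: the kernels are degree-$2$ and degree-$4$ polynomials in the $\X_i$'s, their symmetrizations produce many terms, and one needs to verify that every term that could be of order $(\tr\bSig^2)^2$ or larger either cancels or is dominated — in particular the interplay between the two pieces of the numerator of \eqref{eq:mom}, namely $\var(\|\X-\bmu\|_2^2)$ scaling like $(\theta-1)(\tr\bSig)^2 + 2\theta\tr\bSig^2$ and the $-2\tr\bSig^2$ correction, has to be handled so that the estimated numerator has fluctuations of smaller order than the numerator itself. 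A clean way to organize this is to first prove a general lemma: for any fixed symmetric matrix $\A$ with $\tr\A^2/(\tr\bSig^2)^2$-type control, the $U$-statistic analog of $\E[\x\trans\A\x]$ concentrates; then apply it with $\A=\bI$ and $\A=\bSig$. Once the three normalized variance bounds $\var(T_m/\tr^2\bSig^2) = o(1)$ are in hand, the conclusion is immediate from Chebyshev and the continuous mapping theorem, using that $(\tr^2\bSig + 2\tr\bSig^2)/\tr^2\bSig^2 \ge 2/(1) >$ const, so the denominator stays bounded away from zero along the sequence.
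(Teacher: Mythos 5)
Your proposal follows essentially the same route as the paper: Hoeffding decomposition of the three degree-4 $U$-statistics, variance bounds on the first-order projections via explicit moments of quadratic forms in $\U$ (with Assumption \ref{assum2} controlling the $\xi$-moments and Assumption \ref{assum1} killing the normalized variances and the higher-order Hoeffding terms), reduction to $\bmu=\bm{0}$ by translation invariance of $\X_i-\X_j$, and then Chebyshev plus the continuous mapping theorem. The only slip is the normalizer: the paper divides by $\tr^2\bSig=(\tr\bSig)^2$, not by $(\tr\bSig^2)^2$; with your choice the denominator $(T_2+2T_3)/(\tr\bSig^2)^2$ is not bounded away from zero in general, whereas $(\tr^2\bSig+2\tr\bSig^2)/\tr^2\bSig\in[1,3]$ always is --- your parenthetical ``$(\tr\bSig)^2+\tr\bSig^2$-type quantities'' indicates you intended the correct scale.
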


Theorem \ref{thm1} indicates that $\hat\theta_{n}$ is consistent when the data are collected from general elliptical distributions. This extends the consistency result in \cite{ke2018higher} and \cite{Wang2023boot} under weaker conditions.

\begin{theorem}\label{thm2}
Under Assumptions \ref{assum1} and \ref{assum2}, we have that:
\begin{itemize}
    \item[(i)] if $\var(\xi^2/p)=\tau/p+o(1/p)$ and $\var\left((\xi^2/p-1)^2\right)=2\tau^2/p^2+o(1/p^2)$ for some constants $\tau>0$, then
    \begin{align*}
        \frac{\sqrt{n}}{\sigma}\left(\hat\theta_{n}-\theta\right)\tod N(0,1),
    \end{align*}
    where
    \begin{align*}
        \sigma^2=2\left(\frac{\tau-2}{p}+2\frac{\tr\bSig^2}{\tr^2\bSig}\right)^2;
    \end{align*}
    \item[(ii)] if $\var(\xi^2/p)=O(1)$, then
    \begin{align*}
        \frac{\sqrt{n}}{\sigma}\left(\hat\theta_{n}-\theta\right)\tod N(0,1),
    \end{align*}
    where
    \begin{align*}
        \sigma^2=&\var\left[\left(\frac{\xi^2}{p}-1\right)^2-2\var\left(\frac{\xi^2}{p}\right)\cdot\frac{\xi^2}{p}\right].
    \end{align*}
\end{itemize}
\end{theorem}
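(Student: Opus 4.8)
The plan is to obtain the asymptotic normality of $\hat\theta_n$ by combining a Hoeffding decomposition of the three U-statistics $T_1, T_2, T_3$ with the delta method applied to the smooth map $(a,b,c)\mapsto (a+b-2c)/(b+2c)$. First I would set $Y_i = \|\X_i-\bmu\|_2^2$ and $\W_i = \bSig^{1/2}\U_i$, and write each of $T_1, T_2, T_3$ as a U-statistic of order $4$ with a symmetrized kernel. Using the representation $\X_i-\bmu = \xi_i\W_i$, the kernels decompose into a part depending only on the radial variables $\xi_i^2/p$ and a part depending on the inner products $\W_i\trans\W_j$; the latter have conditional mean (given the $\xi$'s) proportional to $\tr\bSig$ or $\tr\bSig^2$, and their fluctuations are governed by Assumption \ref{assum1}, which guarantees $\tr\bSig^4/\tr^2\bSig^2\to 0$ so that the non-degenerate (first-order) Hoeffding projection dominates. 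The key computation is to identify, for each $T_m$, the first-order projection $h_m^{(1)}(\X_1) = \E[\,\text{kernel}\mid \X_1\,] - \E[\,\text{kernel}\,]$ and to show the higher-order projections contribute $o_p(n^{-1/2})$ after normalization; standard U-statistic theory (e.g. the projection argument in \cite{chen2010tests,guo2016tests}) then gives a joint CLT for $\sqrt n\,(T_1-\E T_1, T_2-\E T_2, T_3-\E T_3)$, possibly after re-scaling by $p^2$ to keep quantities at constant order.

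Next I would assemble the limit. Writing $\hat\theta_n = g(T_1,T_2,T_3)$ with $g$ as above, a Taylor expansion around $(\E T_1,\E T_2,\E T_3)$ — whose value is $\theta$ up to lower-order terms by the moment identity \eqref{eq:mom} — yields $\hat\theta_n-\theta = \nabla g\cdot (T_1-\E T_1, T_2-\E T_2, T_3-\E T_3) + o_p(n^{-1/2})$, and the gradient is evaluated using $\E T_2 \approx \tr^2\bSig$, $\E T_3\approx \tr\bSig^2$, $\E T_1 \approx (\theta-1)\tr^2\bSig + 2\theta\tr\bSig^2$. The resulting asymptotic variance is a quadratic form in the covariances of the three first-order projections, all of which are explicit functions of $\var(\xi^2/p)$, $\var((\xi^2/p-1)^2)$, $\tr^2\bSig$ and $\tr\bSig^2$. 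At this point the two cases split according to the relative size of $\var(\xi^2/p)$: in case (i), the scaling assumptions $\var(\xi^2/p)=\tau/p+o(1/p)$ and $\var((\xi^2/p-1)^2)=2\tau^2/p^2+o(1/p^2)$ let the radial contributions be absorbed into the same $O(1/p)$ scale as the $\tr\bSig^2/\tr^2\bSig$ term, producing the compact form $\sigma^2 = 2((\tau-2)/p + 2\tr\bSig^2/\tr^2\bSig)^2$; in case (ii), where $\var(\xi^2/p)=O(1)$, the radial fluctuations dominate and the geometric terms are negligible, so the projection reduces to the single random variable $(\xi^2/p-1)^2 - 2\var(\xi^2/p)\cdot \xi^2/p$ and its variance gives the stated $\sigma^2$.

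The main obstacle will be case (i): there the leading radial variance $\var(\xi^2/p)=\tau/p$ is of the \emph{same} order $1/p$ as the geometric term $\tr\bSig^2/\tr^2\bSig$, so one cannot simply discard one against the other, and the cross-terms between the radial projections and the $\W\trans\W$ projections must be tracked to constant order inside $p\cdot\sigma^2$; getting the coefficient $(\tau-2)$ exactly right requires carefully combining the contributions of $T_1$, $T_2$ and $T_3$ through $\nabla g$, including the second-order term $\var((\xi^2/p-1)^2)=2\tau^2/p^2+o(1/p^2)$ which feeds into the variance of the kernel of $T_1$. A secondary technical point is verifying the Lindeberg/negligibility condition for the higher-order Hoeffding terms uniformly in $p$ under only the fourth-moment Assumption \ref{assum2} (rather than a $4+\varepsilon$ moment), which is where the bound $\E\xi^8 = O(p^4)$ is used; once the first-order projection is isolated and its variance is nonzero, the one-dimensional CLT follows from the martingale/Hájek-projection argument.
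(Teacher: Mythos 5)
Your proposal follows essentially the same route as the paper: Hoeffding decomposition of $T_1,T_2,T_3$, reduction to the dominant first-order projections, linearization of the ratio (the paper uses the exact identity $\hat\theta_{n}-\theta=\bigl(T_1+(1-\theta)T_2-2(1+\theta)T_3\bigr)/(T_2+2T_3)$ together with $\left(T_2+2T_3\right)/\tr^2\bSig\topr 1$ and Slutsky, rather than a Taylor/delta-method expansion, but the leading linear term is identical), and the same case split governed by the order of $\var(\xi^2/p)$. The points you flag as delicate --- tracking the radial/geometric cross terms to extract the coefficient $\tau-2$ in case (i), and controlling the higher-order projection terms under only $\E\xi^8=O(p^4)$ --- are precisely where the paper's explicit computations via Lemmas \ref{lem:1} and \ref{lem:2} are concentrated.
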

\begin{remark}\label{rem1}
Case (i) requires $\var(\xi^2/p)$ to be of order $1/p$, similar to the requirements in \cite{hu2019high} and \cite{Wang2023boot}. Additionally, $\var\left((\xi^2/p-1)^2\right)$ must be of order $1/p^2$, which is necessary for establishing the asymptotic normality. These two conditions are satisfied in several widely studied sub-classes of elliptical models. For example, if $\X\sim N(\bmu,\bSig)$, then $\xi\sim \chi^2(p)$ and thus we have
\begin{align*}
     \var\left(\frac{\xi^2}{p}\right)=\frac{2}{p},~\var\left[\left(\frac{\xi^2}{p}-1\right)^2\right]=\frac{8}{p^2}+o\left(\frac{1}{p^2}\right),
 \end{align*}
which implies that the conditions hold with $\tau=2$. More examples can be found in the following section.
\end{remark}
\subsection{Examples}\label{sec2:exm}

In this subsection, we apply the theoretical results in Section \ref{sec2-2}  to derive the confidence interval of the kurtosis parameter $\theta$ for some widely used elliptical distributions.

\begin{example}[Example 2.3 of \citealt{hu2019high}]\label{exm1}
	Let $\X=\bmu+\xi\bSig^{1/2}\U$ with $\xi^2=\sum_{j=1}^{p}Y_j^2$ independent of $\U$, where $\{Y_j,j=1,\cdots,p\}$ is a sequence of i.i.d. random variables with 
	\begin{align*}
		\E Y_j=0,~\E Y_j^2=1,~\E Y_j^4=3+\Delta,~\E Y_j^8<+\infty.
	\end{align*}
The normal distribution is a special case with $Y_j\sim N(0,1),~ j=1,\cdots,p$.	By simple calculation, we have 
 \begin{align*}
     \var\left(\frac{\xi^2}{p}\right)=\frac{2+\Delta}{p},~\var\left[\left(\frac{\xi^2}{p}-1\right)^2\right]=\frac{(2+\Delta)^2}{p^2},
 \end{align*}
which implies that the conditions of Case (i) in Theorem \ref{thm2} holds with $\tau=2+\Delta$. Therefore, 
	\begin{align*}
		\sqrt{\frac{n}{2}}\left(\frac{\Delta}{p}+2\frac{\tr\bSig^2}{\tr^2\bSig}\right)^{-1}(\hat\theta_{n}-\theta)\tod N(0,1).
	\end{align*}
We further estimate $\tr\bSig^2/\tr^2\bSig$ using its consistent estimator $T_3/T_2$ based on Theorem \ref{thm1}, and estimate $\Delta$ by the plug-in estimator $\hat\Delta_n=(p+2)(\hat\theta_n-1)$, leveraging the fact that $\Delta=(p+2)(\theta-1)$. Then, by Slutsky's theorem, we have
 	\begin{align}
		\sqrt{\frac{n}{2}}\left(\frac{\hat\Delta_n}{p}+2\frac{T_3}{T_2}\right)^{-1}(\hat\theta_{n}-\theta)\tod N(0,1).\label{eq:an1}
	\end{align}
Therefore, for a pre-specified nominal level $\alpha \in (0,1)$, the corresponding $1-\alpha$ confidence interval for $\theta$ can be constructed as 
\begin{align}
    \left[\hat\theta_n-\sqrt{\frac{2}{n}}\left(\frac{\hat\Delta_n}{p}+2\frac{T_3}{T_2}\right)Z_{\alpha/2},~\hat\theta_n+\sqrt{\frac{2}{n}}\left(\frac{\hat\Delta_n}{p}+2\frac{T_3}{T_2}\right)Z_{\alpha/2}\right],\label{eq:ci1}
\end{align}
 where $Z_{\alpha/2}$ is the upper $\alpha/2$ quantile of the standard normal distribution.
\end{example}

\begin{example}[Kotz-type distribution of \citealt{kotz1975multivariate}]\label{exm3}
Let $\X=\bmu+\xi\bSig^{1/2}\U$ with $\xi^{2s}\sim $Gamma$(\vartheta,\beta)$, then the $p$-dimensional random vector $\X$ follows the Kotz-type distribution, with the density function
	\begin{align*}
		f(\x)=\frac{s\beta^{\vartheta}\Gamma(p/2)}{\pi^{p/2}\Gamma(\vartheta)}\left((\x-\bmu)\trans\bSig^{-1}(\x-\bmu)\right)^{k-1}\exp\left\{-\beta\left((\x-\bmu)\trans\bSig^{-1}(\x-\bmu)\right)^s\right\},
	\end{align*}
	where $\vartheta=(k-1+p/2)/s>0$ and $\beta,s>0$. 
To ensure the identifiability, normalize $\xi$ to satisfy $\E\xi^2=p$. Then, we can obtain the moments of $\xi^2$ as 
 	\begin{align*}		\E\xi^{2m}=p^m\frac{\Gamma(\vartheta+m/s)\Gamma^{m-1}(\vartheta)}{\Gamma^m(\vartheta+1/s)},~m=1,2,\cdots.
	\end{align*}
 Taking $\vartheta=p$, $\beta=1$ and $s=1/2$ as a special case, we have 
 \begin{align*}
		\E\xi^{2m}=\frac{\prod_{j=1}^{2m}(p+j-1)}{(p+1)^m}, \ m=1,2,\cdots 
	\end{align*}
 which gives
 \begin{align*}
     \var\left(\frac{\xi^2}{p}\right)=\frac{4}{p}+o\left(\frac{1}{p}\right),~\var\left[\left(\frac{\xi^2}{p}-1\right)^2\right]=\frac{32}{p^2}+o\left(\frac{1}{p^2}\right).
 \end{align*}
This implies the Kotz-type distribution satisfies the conditions of Case (i) in Theorem \ref{thm2} with $\tau=4$. Therefore, 
\begin{align*}
		\sqrt{\frac{n}{8}}\left(\frac{1}{p}+\frac{\tr\bSig^2}{\tr^2\bSig}\right)^{-1}\left(\hat\theta_{n}-\theta\right)\tod N(0,1).
	\end{align*}
 Consequently, for a pre-specified nominal level $\alpha \in (0,1)$, the corresponding $1-\alpha$ confidence interval for $\theta$ can be constructed as 
 \begin{align}
		\left[\hat\theta_{n}-\sqrt{\frac{8}{n}}\left(\frac{1}{p}+\frac{T_3}{T_2}\right)Z_{\alpha/2},~\hat\theta_{n}+\sqrt{\frac{8}{n}}\left(\frac{1}{p}+\frac{T_3}{T_2}\right)Z_{\alpha/2}\right].\label{eq:ci3}
	\end{align}	
\end{example}
\begin{example}[Multivariate $t$ distribution]\label{exm2}
 Let $\X=\bmu+\xi\bSig^{1/2}\U$ with $\xi^2\sim p((d-2)/d)F(p,d)$, then the $p$-dimensional random vector $\X$ follows the multivariate $t$ distribution $t_{d}(\bmu,\bSig)$, with the density function
	\begin{align*}
		f(\x)=	\frac{\Gamma[(d+p)/2]}{\Gamma(d/2)d^{p/2}\pi^{p/2}|\bSig|^{1/2}}\left[1+\frac{1}{d}(\x-\bmu)\trans\bSig^{-1}(\x-\bmu)\right]^{-(d+p)/2}.
	\end{align*}
Note that 
 \begin{align*}
     \var\left(\frac{\xi^2}{p}\right)=\frac{2}{d-4}+O\left(\frac{1}{p}\right),
 \end{align*}
 which implies the multivariate $t$ distribution of fixed $d$ satisfies the conditions of Case (ii) in Theorem \ref{thm2}. By simple calculation, we have
 \begin{equation*}
 \sigma^2 = \frac{8(d-2)^2(d+4)}{(d-4)^3(d-6)(d-8)}+O\left(\frac{1}{p}\right)
 \end{equation*}
 which implies that   
	\begin{align*}
		\sqrt{\frac{n(d-4)^3(d-6)(d-8)}{8(d-2)^2(d+4)}}\left(\hat\theta_{n}-\theta\right)\tod N(0,1).
	\end{align*}
We estimate the degree of freedom $d$ by its plug-in estimate 
	\begin{align*}
		d_n=\frac{4\hat\theta_{n}-2}{\hat\theta_{n}-1},
	\end{align*}
	and hence
the $1-\alpha$ confidence interval for $\theta$ is
	\begin{align}
		\left[\hat\theta_{n}-\sqrt{\frac{8(d_n-2)^2(d_n+4)}{n(d_n-4)^3(d_n-6)(d_n-8)}}Z_{\alpha/2}, \hat\theta_{n}+\sqrt{\frac{8(d_n-2)^2(d_n+4)}{n(d_n-4)^3(d_n-6)(d_n-8)}}Z_{\alpha/2}\right].\label{eq:ci2}
	\end{align}
\end{example}

\begin{example}[Multivariate Laplace distribution]\label{exm4}

	The random vector $\X$ follows a normal scale mixture distribution if $\X=\bmu+\xi\bSig^{1/2}\U$ with $\xi=\sqrt{R_1R_2}$, where the random variables $R_1$ and $R_2$ are independent and $R_2\sim\chi^2_p$. In particular, if $R_1$ follows an exponential distribution $\lambda\exp(\lambda)$ with some $\lambda>0$, it reduces to the multivariate Laplace distribution \citep{eltoft2006multivariate}. This distribution satisfies the conditions of Case (ii) in Theorem \ref{thm2} with 
 \begin{align*}
    \var\left(\frac{\xi^2}{p}\right)=1+\frac{4}{p}.
 \end{align*}
 Note that 
\begin{align*}
    \sigma^2=&\var\left[\left(\frac{\xi^2}{p}-1\right)^2-2\var\left(\frac{\xi^2}{p}\right)\cdot\frac{\xi^2}{p}\right]=4+O\left(\frac{1}{p}\right).
\end{align*} 
Therefore,
	\begin{align*}
		\sqrt{n}(\hat\theta_{n}-\theta)\tod N(0,4),
	\end{align*}
which gives the $1-\alpha$ confidence interval for $\theta$ as
	\begin{align}
		\left[\hat\theta_{n}-\sqrt{\frac{2}{n}}Z_{\alpha/2},~\hat\theta_{n}+\sqrt{\frac{2}{n}}Z_{\alpha/2}\right].\label{eq:ci4}
	\end{align}
\end{example}

\begin{example}[General elliptical distribution]\label{exm5}

    In practice, we typically assume $\X = \bmu+\xi\bSig^{1/2}\U$ follows an elliptical distribution with the distribution of $\xi$ unknown. To derive the confidence interval for $\theta$, we first need to estimate $\sigma^2$ described in Theorem \ref{thm2} properly.
    
\begin{itemize}
    \item If Case (i) holds, noting that $\tau=\lim_{p\to\infty}[(p+2)\theta-p]$, we can estimate $\tau$ by $\hat\tau_n=(p+2)\theta_n-p$, and then give an estimator of $\sigma^2$ by
\begin{align}
    \hat\sigma_n^2=2\left(\frac{\hat\tau_n-2}{p}+2\frac{T_3}{T_2}\right)^2. \label{eq:sign1}
\end{align}
\item If Case (ii) holds, note that
\begin{align*}
  \sigma^2=\frac{\E\xi^8}{p^4}-\left(\frac{\E\xi^4}{p^2}\right)^2-4\frac{\E\xi^6\E\xi^4}{p^5}+4\left(\frac{\E\xi^4}{p^2}\right)^3,
\end{align*}
where
\begin{eqnarray*}
 \E\xi^4 &=& p(p+2)\theta,\\
 \E\xi^6 &=& \frac{p(p+2)(p+4)\E((\X-\bmu)\trans(\X-\bmu))^3}{\tr^3\bSig+6\tr\bSig\tr\bSig^2+8\tr\bSig^3},\\
 \E\xi^8 &=& \frac{p(p+2)(p+4)(p+6)\E((\X-\bmu)\trans(\X-\bmu))^4}{\tr^4\bSig+12\tr^2\bSig\tr\bSig^2+12\tr^2\bSig^2+32\tr\bSig\tr\bSig^3+48\tr\bSig^4}.
\end{eqnarray*}
Let $\bar{\X}$ be the sample mean and $\widehat{\bSig}$ be the sample covariance matrix. Then, we can estimate $\sigma^2$ by
\begin{align}
    \hat\sigma_n^2=\frac{\hat\varphi_n}{p^4}-\left(\frac{(p+2)\hat\theta_n}{p}\right)^2-4\frac{\hat\varrho_n}{p^3}\left(\frac{(p+2)\hat\theta_n}{p}\right)+4\left(\frac{(p+2)\hat\theta_n}{p}\right)^3,\label{eq:sign2}
\end{align}
where
\begin{eqnarray*}
    {\hat\varrho_n} &=& \frac{p(p+2)(p+4)\sum_{i=1}^{n}((\X_i-\bar{\X})\trans(\X_i-\bar{\X}))^3/n}{\tr^3\widehat{\bSig}+6\tr\widehat{\bSig}\tr\widehat{\bSig}^2+8\tr\widehat{\bSig}^3},\\
     {\hat\varphi_n} &=& \frac{p(p+2)(p+4)(p+6)\sum_{i=1}^{n}((\X_i-\bar{\X})\trans(\X_i-\bar{\X}))^4/n}{\tr^4\widehat{\bSig}+12\tr^2\widehat{\bSig}\tr\widehat{\bSig}^2+12\tr^2\widehat{\bSig}^2+32\tr\widehat{\bSig}\tr\widehat{\bSig}^3+48\tr\widehat{\bSig}^4}.
\end{eqnarray*}
\end{itemize}
Given the estimator $\sigma_n^2$, the $1-\alpha$ confidence interval for $\theta$ can be constructed as
\begin{align}
     \left[\hat\theta_n-\frac{\hat\sigma_n}{\sqrt{n}}Z_{\alpha/2},~\hat\theta_n+\frac{\hat\sigma_n}{\sqrt{n}}Z_{\alpha/2}\right].    \label{eq:es_sig}
\end{align}
\end{example}
\section{Simulation studies} \label{sec3}

In this section, we assess the finite-sample performance of the proposed estimation method via extensive simulation studies. We generate the data matrix from the elliptical model 
\[\X_i=\bmu+\xi_i\bSig^{1/2}\U_i, i=1,\ldots, n,\]
where $\bSig=\left(0.5^{|j-k|}\right)$ is a $p\times p$ Toeplitz matrix. According to the examples given in Section \ref{sec2:exm}, we generate $\xi_i^2$ from the following four distributions, all of which satisfy the identifiable assumption $\E\xi_i^2=p$:
\begin{itemize}
	\item[(1)] Chi-squared distribution with $p$ degrees of freedom  $\chi^2_p$ such that $\theta=1$;
        \item[(2)] $\xi_i^2=\omega_i^2/(p+1)$ with $\omega_i\sim $Gamma$(p,1)$; 
        
        
	\item[(3)] $F$ distribution with $p$ and 9 degrees of freedom such that $\theta=1.4$;

	\item[(4)] $\xi_i^2=R_{1i}R_{2i}$ with $R_{1i}\sim \exp(1)$ and $R_{2i}\sim \chi^2_p$ such that $\theta=2$.
\end{itemize}

\subsection{Estimation}

For all the numerical studies in this section, we consider various dimensionality levels $p \in \{100, 200, 400, 800, 1600\}$ with the sample size $n=100$. For comparison, we consider an oracle estimator as a benchmark. Note that 
\begin{align*}
    \xi_i^2=(\X_i-\bmu)\trans\bSig^{-1}(\X_i-\bmu),\ i=1,\cdots,n.
\end{align*}
Then, the oracle estimator for $\theta=\E\xi^4/(p(p+2))$ can be constructed as 
\begin{align*}
    \hat{\theta}^{Oracle}=\frac{1}{np(p+2)}\sum_{i=1}^n  \left[(\X_i-\bmu)\trans\bSig^{-1}(\X_i-\bmu)\right]^2
\end{align*}
when the true parameters $\bmu$ and $\Sig$ are known. Moreover, we implement the coordinate-based method proposed by \cite{ke2018higher}, which is denoted as $\hat{\theta}^{KBF}$;  and the method proposed by \cite{Wang2023boot}, which relies on the moment equation \eqref{eq:mom} and the sample mean and sample covariance matrix, we denote it as $\hat{\theta}^{WL}$. To evaluate the performance of different methods, we summarize the mean and the empirical standard errors over 1000 replications.

Table \ref{tab_err} reports the estimation results of the different methods under the simulation settings described in Section \ref{sec3}. We see that our method performs comparably with  the oracle estimator across all scenarios, indicating that our method is competitive on the kurtosis estimation. Moreover, compared to the methods proposed by \cite{ke2018higher} and \cite{Wang2023boot}, our method has a lower estimation error in most settings. 
 

\begin{table}[!ht] \centering 
\setlength\tabcolsep{3pt}
	\caption{Estimation results: means and standard errors (in parentheses) for the kurtosis estimates based on 1000 replications.} 
	\label{tab_err} 
	\begin{tabular}{@{\extracolsep{5pt}} lccccc} 
		\\[-1.8ex]\hline 
		\hline \\[-1.8ex] 
		& $p=100$ & $p=200$ & $p=400$ & $p=800$ & $p=1600$ \\ 
		\hline \\[-1.8ex] 
		&\multicolumn{5}{c}{Normal distribution $(\theta=1)$}\\
		$\hat\theta_n$&1.000(0.005) & 1.000(0.002) & 1.000(0.001) & 1.000(0.001) & 1.000(0.000) \\ 
		$\hat{\theta}^{WL}$&0.999(0.005) & 0.999(0.002) & 1.000(0.001) & 1.000(0.001) & 1.000(0.000) \\ 
		$\hat{\theta}^{KBF}$&0.979(0.016) & 0.980(0.011) & 0.980(0.008) & 0.980(0.006) & 0.980(0.004) \\
  $\hat{\theta}^{Oracle}$&1.000(0.003) & 1.000(0.001) & 1.000(0.001) & 1.000(0.000) & 1.000(0.000) \\
  		&\multicolumn{5}{c}{Kotz-type distribution $(\theta=(p+3)/(p+1))$}\\ 
		$\hat\theta_n$&1.020(0.008) & 1.010(0.004) & 1.005(0.002) & 1.002(0.001) & 1.001(0.000) \\ 
		$\hat{\theta}^{WL}$&1.018(0.008) & 1.008(0.004) & 1.004(0.002) & 1.002(0.001) & 1.001(0.000) \\ 
		$\hat{\theta}^{KBF}$&0.997(0.018) & 0.989(0.012) & 0.984(0.008) & 0.982(0.006) & 0.981(0.004) \\
  $\hat{\theta}^{Oracle}$&1.020(0.006) & 1.010(0.003) & 1.005(0.001) & 1.002(0.001) & 1.001(0.000) \\
		&\multicolumn{5}{c}{Multivariate $t$ distribution $(\theta=1.4)$}\\
		$\hat\theta_n$&1.385(0.207) & 1.395(0.293) & 1.388(0.208) & 1.370(0.187) & 1.383(0.184) \\ 
		$\hat{\theta}^{WL}$&1.355(0.186) & 1.365(0.243) & 1.359(0.185) & 1.343(0.168) & 1.355(0.165) \\ 
		$\hat{\theta}^{KBF}$&1.277(0.121) & 1.283(0.135) & 1.280(0.118) & 1.268(0.105) & 1.277(0.107) \\ 
  $\hat{\theta}^{Oracle}$&1.386(0.208) & 1.398(0.309) & 1.387(0.205) & 1.371(0.186) & 1.383(0.185) \\
		&\multicolumn{5}{c}{Multivariate Laplace distribution $(\theta=2)$}\\
		$\hat\theta_n$&2.009(0.225) & 2.000(0.216) & 2.004(0.200) & 2.001(0.212) & 2.011(0.209) \\ 
		$\hat{\theta}^{WL}$&1.919(0.198) & 1.911(0.190) & 1.915(0.176) & 1.912(0.187) & 1.921(0.184) \\ 
		$\hat{\theta}^{KBF}$&1.783(0.153) & 1.776(0.144) & 1.781(0.136) & 1.776(0.141) & 1.784(0.140) \\
  $\hat{\theta}^{Oracle}$&2.006(0.211) & 2.001(0.208) & 2.005(0.196) & 2.001(0.210) & 2.011(0.209) \\
		\hline \\[-1.8ex] 
	\end{tabular} 
\end{table} 
\subsection{Inference}

In this subsection, we assess the performance of the proposed procedure for constructing confidence interval for the kurtosis parameter under the simulation settings described in Section 3. For comparison, we consider the following three types of confidence intervals:
\begin{itemize}
    \item CI$_1$: confidence intervals \eqref{eq:ci1} - \eqref{eq:ci4}, respectively, with the known distribution family;
    \item CI$_2$: confidence interval \eqref{eq:es_sig} without knowing the specific distribution family;
    \item CI$^{KBF}$: confidence interval constructed in \cite{ke2018higher}.
\end{itemize}
We set the significance level $\alpha=0.05$. To measure the reliability and accuracy of different methods for constructing confidence intervals, we calculate the average empirical coverage probability and average width of confidence interval.

 Table \ref{tab:ci} reports the results under different dimensionality levels based on 1000 replications. CI$^{KBF}$ is the most conservative as it produces the widest confidence intervals with slightly inflated coverage
probability. The proposed methods CI$_1$ and CI$_2$ achieve a good balance between reliability (high coverage probability) and accuracy (narrow CI width). Comparing the four distribution families, the proposed methods give more accurate confidence intervals under the normal and Kotz-type distributions, which is in accordance with Theorem \ref{thm2} in Section \ref{sec2-2}.
 
 
\begin{table}[!ht] \centering 
  \caption{Empirical coverage probability and average width of confidence interval of three methods with the significance level $\alpha=0.05$. The results are averaged over 1000 datasets.}
  \label{tab:ci} 
\begin{tabular}{@{\extracolsep{5pt}} cccccccccc} 
\\[-1.8ex]\hline 
\hline \\[-1.8ex] 
$p$&\multicolumn{2}{c}{CI$_1$}&\multicolumn{2}{c}{CI$_2$}&\multicolumn{2}{c}{CI$^{KBF}$}\\
\hline \\[-1.8ex] 
&ECP & AL & ECP & AL & ECP & AL\\
\hline 
&\multicolumn{6}{c}{Normal distribution}\\
\hline
100&$0.934$ & $0.018$ & $0.931$ & $0.018$ & $1$ & $1.194$ \\ 
200&$0.950$ & $0.009$ & $0.942$ & $0.009$ & $0.999$ & $1.189$ \\ 
400&$0.951$ & $0.005$ & $0.949$ & $0.005$ & $0.996$ & $1.182$ \\ 
800&$0.953$ & $0.002$ & $0.926$ & $0.002$ & $0.998$ & $1.180$ \\ 
1600&$0.941$ & $0.001$ & $0.934$ & $0.001$ & $0.995$ & $1.178$ \\ 
\hline
&\multicolumn{6}{c}{Kotz-type distribution}\\
\hline
100&$0.946$ & $0.029$ & $0.928$ & $0.029$ & $0.996$ & $1.261$ \\ 
200&$0.946$ & $0.015$ & $0.933$ & $0.015$ & $0.996$ & $1.220$ \\ 
400&$0.946$ & $0.007$ & $0.925$ & $0.007$ & $0.996$ & $1.199$ \\ 
800&$0.952$ & $0.004$ & $0.939$ & $0.004$ & $0.998$ & $1.189$ \\ 
1600&$0.947$ & $0.002$ & $0.934$ & $0.002$ & $0.995$ & $1.183$ \\
\hline 
&\multicolumn{6}{c}{Multivariate $t$ distribution}\\
\hline
100&$0.900$ & $0.370$ & $0.806$ & $0.374$ & $0.994$ & $2.389$ \\ 
200&$0.908$ & $0.386$ & $0.871$ & $0.373$ & $0.986$ & $2.317$ \\ 
400&$0.925$ & $0.386$ & $0.904$ & $0.393$ & $0.983$ & $2.348$ \\ 
800&$0.924$ & $0.394$ & $0.926$ & $0.403$ & $0.985$ & $2.340$ \\ 
1600&$0.933$ & $0.393$ & $0.933$ & $0.396$ & $0.988$ & $2.327$ \\ 
\hline 
&\multicolumn{6}{c}{Multivariate Laplace distribution}\\
\hline
100&$0.935$ & $0.784$ & $0.934$ & $0.796$ & $0.989$ & $4.092$ \\ 
200&$0.957$ & $0.796$ & $0.954$ & $0.822$ & $0.993$ & $4.094$ \\ 
400&$0.955$ & $0.802$ & $0.961$ & $0.846$ & $0.993$ & $4.064$ \\ 
800&$0.969$ & $0.784$ & $0.959$ & $0.850$ & $0.991$ & $4.054$ \\ 
1600&$0.949$ & $0.795$ & $0.963$ & $0.864$ & $0.992$ & $4.094$ \\ 
\hline \\[-1.8ex]
\end{tabular} 
\end{table} 
\section{Real data analysis}\label{sec4}

In this section, we illustrate the proposed method by analysing the following data sets:
\begin{itemize}
	\item The iris flower dataset, available in the R package $datasets$, consists of 150 samples capturing four measurements from three different species of the iris flower.
	\item The Wisconsin breast cancer dataset, sourced from the UCI Machine Learning Repository, comprises 569 samples featuring 30 distinct characteristics of cell nuclei derived from fine needle aspirates of breast tissue. The samples are labeled as either malignant or benign.
	\item The Parkinson's dataset, available from the UCI Machine Learning Repository, encompasses voice measurements from 188 patients and 64 healthy individuals. With a focus on Parkinson's Disease diagnosis and analysis, the dataset includes 752 features covering a range of voice aspects such as fundamental frequency, spectral residual, and variability.
	\item The prostate cancer data contains 102 samples with 6033 genes, in which 52 are patients with tumor and 50 are normal individuals. The detailed description can be found in \cite{dettling2004bagboosting} and the dataset can be downloaded from \url{http://stat.ethz.ch/~dettling/bagboost.html}.
\end{itemize}

For each dataset, we apply the proposed method to estimate the kurtosis parameter and construct the 95\% confidence interval using \eqref{eq:es_sig} with Cases (i) and (ii).  As shown in Table \ref{tab_realdata}, the malignant breast cancer and the Parkinson's datasets exhibit large kurtosis values, indicating that the data are heavy-tailed. Furthermore, the CI for Case (i) may be incredible for these data because if Case (i) holds, $\theta=1+O(1/p)$.

\begin{table}[!htbp] \centering 
	\setlength\tabcolsep{0pt}
	\caption{Estimation of kurtosis and corresponding 95\% confidence interval for real datasets.} 
	\label{tab_realdata} 
	\begin{tabular}{@{\extracolsep{5pt}} cccccc} 
		\\[-1.8ex]\hline 
		\hline \\[-1.8ex] 
		dataset&class&$(n,p)$&$\theta_n$&\multicolumn{2}{c}{95\%CI}\\
		&&&&Case (i)&Case (ii)\\   
		\hline
		\multirow{3}{*}{iris}&setosa&(50,4)&1.059&(0.563,1.555)&(0.676,1.442)\\
		&versicolor&(50,4)&0.874&(0.465,1.284)&(0.633,1.116)\\
		&virginica&(50,4)&1.008&(0.521,1.495)&(0.655,1.361)\\
		\hline
		\multirow{2}{*}{breast cancer}&malignant&(212,30)& 1.839&(1.317,2.362)&(1.783,1.895)\\
		&benign&(357,30)&1.076&(0.779,1.374)&(1.044,1.108)\\
		\hline
		\multirow{2}{*}{Parkinson}&healthy&(192,752)&5.211&(3.967,6.457)&(4.263,6.161)\\
		&patients&(564,752)&2.271&(1.888,2.652)&(2.200,2.340)\\
		\hline
		\multirow{2}{*}{Prostate}&healthy&(50,6033)&0.825&(0.643,1.008)&(0.696,0.954)\\
		&patients&(52,6033)&1.095&(0.889,1.301)&(0.909,1.280)\\
		\hline
	\end{tabular} 
\end{table}	

\section{Conclusions}
In this paper, we conduct a U-statistic form estimation of the kurtosis parameter in elliptical distributions. Theoretically, we extend the results in existing works\citep{ke2018higher, Wang2023boot}. In special, the consistency of the proposed estimation is established under mild conditions and we derive the asymptotic normality under different moment conditions, which implies different convergence rate of the proposed estimator. A direct generalization of our method is to consider higher moments of $\xi^2$ like \cite{ke2018higher}, and one may use higher order U-statistic estimations. Another extension is to estimate the kurtosis parameter in a multivariate time series with elliptically-distributed noise. We leave all these as future works.


\bibliographystyle{abbrvnat}
\bibliography{ref}

\begin{thebibliography}{20}
\providecommand{\natexlab}[1]{#1}
\providecommand{\url}[1]{\texttt{#1}}
\expandafter\ifx\csname urlstyle\endcsname\relax
  \providecommand{\doi}[1]{doi: #1}\else
  \providecommand{\doi}{doi: \begingroup \urlstyle{rm}\Url}\fi

\bibitem[Bao and Ullah(2010)]{BAO20101193}
Y.~Bao and A.~Ullah.
\newblock Expectation of quadratic forms in normal and nonnormal variables with
  applications.
\newblock \emph{Journal of Statistical Planning and Inference}, 140\penalty0
  (5):\penalty0 1193--1205, 2010.

\bibitem[Chen and Qin(2010)]{chen2010two}
S.~X. Chen and Y.-L. Qin.
\newblock A two-sample test for high-dimensional data with applications to
  gene-set testing.
\newblock \emph{The Annals of Statistics}, 38\penalty0 (2):\penalty0 808 --
  835, 2010.

\bibitem[Chen et~al.(2010)Chen, Zhang, and Zhong]{chen2010tests}
S.~X. Chen, L.-X. Zhang, and P.-S. Zhong.
\newblock Tests for high-dimensional covariance matrices.
\newblock \emph{Journal of the American Statistical Association}, 105\penalty0
  (490):\penalty0 810--819, 2010.

\bibitem[{\v{C}}{\'\i}{\v{z}}ek et~al.(2011){\v{C}}{\'\i}{\v{z}}ek, H{\"a}rdle,
  Weron, and H{\"a}rdle]{vcivzek2011statistical}
P.~{\v{C}}{\'\i}{\v{z}}ek, W.~H{\"a}rdle, R.~Weron, and W.~H{\"a}rdle.
\newblock \emph{Statistical tools for finance and insurance}.
\newblock Springer, 2011.

\bibitem[Dettling(2004)]{dettling2004bagboosting}
M.~Dettling.
\newblock Bagboosting for tumor classification with gene expression data.
\newblock \emph{Bioinformatics}, 20\penalty0 (18):\penalty0 3583--3593, 2004.

\bibitem[Eltoft et~al.(2006)Eltoft, Kim, and Lee]{eltoft2006multivariate}
T.~Eltoft, T.~Kim, and T.-W. Lee.
\newblock On the multivariate laplace distribution.
\newblock \emph{IEEE Signal Processing Letters}, 13\penalty0 (5):\penalty0
  300--303, 2006.

\bibitem[Fan et~al.(2015)Fan, Ke, Liu, and Xia]{fan2015quadro}
J.~Fan, Z.~T. Ke, H.~Liu, and L.~Xia.
\newblock {QUADRO}: A supervised dimension reduction method via {R}ayleigh
  quotient optimization.
\newblock \emph{Annals of Statistics}, 43\penalty0 (4):\penalty0 1498, 2015.

\bibitem[Fan et~al.(2018)Fan, Liu, and Wang]{fan2018large}
J.~Fan, H.~Liu, and W.~Wang.
\newblock Large covariance estimation through elliptical factor models.
\newblock \emph{Annals of Statistics}, 46\penalty0 (4):\penalty0 1383, 2018.

\bibitem[Fang and Zhang(1990)]{fang1990generalized}
K.-T. Fang and Y.-T. Zhang.
\newblock \emph{Generalized multivariate analysis}.
\newblock Springer, Berlin, 1990.

\bibitem[Guo and Chen(2016)]{guo2016tests}
B.~Guo and S.~X. Chen.
\newblock Tests for high dimensional generalized linear models.
\newblock \emph{Journal of the Royal Statistical Society Series B: Statistical
  Methodology}, 78\penalty0 (5):\penalty0 1079--1102, 2016.

\bibitem[Gupta et~al.(2013)Gupta, Varga, Bodnar, et~al.]{gupta2013elliptically}
A.~K. Gupta, T.~Varga, T.~Bodnar, et~al.
\newblock \emph{Elliptically contoured models in statistics and portfolio
  theory}.
\newblock Springer, 2013.

\bibitem[Han and Liu(2012)]{han2012transelliptical}
F.~Han and H.~Liu.
\newblock Transelliptical component analysis.
\newblock \emph{Advances in Neural Information Processing Systems}, 25, 2012.

\bibitem[Hu et~al.(2019)Hu, Li, Liu, and Zhou]{hu2019high}
J.~Hu, W.~Li, Z.~Liu, and W.~Zhou.
\newblock High-dimensional covariance matrices in elliptical distributions with
  application to spherical test.
\newblock \emph{Annals of Statistics}, 47\penalty0 (1):\penalty0 527--555,
  2019.

\bibitem[Ke et~al.(2018)Ke, Bose, and Fan]{ke2018higher}
Z.~T. Ke, K.~Bose, and J.~Fan.
\newblock Higher moment estimation for elliptically-distributed data: Is it
  necessary to use a sledgehammer to crack an egg?
\newblock \emph{arXiv:1812.05697}, 2018.

\bibitem[Kotz(1975)]{kotz1975multivariate}
S.~Kotz.
\newblock Multivariate distributions at a cross road.
\newblock In \emph{A Modern Course on Statistical Distributions in Scientific
  Work: Volume 1—Models and Structures Proceedings of the NATO Advanced Study
  Institute held at the University of Calgagry, Calgary, Alberta, Canada July
  29--August 10, 1974}, pages 247--270. Springer, 1975.

\bibitem[McNeil et~al.(2015)McNeil, Frey, and
  Embrechts]{mcneil2015quantitative}
A.~J. McNeil, R.~Frey, and P.~Embrechts.
\newblock \emph{Quantitative risk management: concepts, techniques and
  tools-revised edition}.
\newblock Princeton university press, 2015.

\bibitem[Posekany et~al.(2011)Posekany, Felsenstein, and
  Sykacek]{posekany2011biological}
A.~Posekany, K.~Felsenstein, and P.~Sykacek.
\newblock Biological assessment of robust noise models in microarray data
  analysis.
\newblock \emph{Bioinformatics}, 27\penalty0 (6):\penalty0 807--814, 2011.

\bibitem[Seo and Toyama(1996)]{seo1996estimation}
T.~Seo and T.~Toyama.
\newblock On the estimation of kurtosis parameter in elliptical distributions.
\newblock \emph{Journal of the Japan Statistical Society}, 26\penalty0
  (1):\penalty0 59--68, 1996.

\bibitem[Thomas et~al.(2010)Thomas, de~la Torre, Chang, and
  Mehrotra]{thomas2010validation}
R.~Thomas, L.~de~la Torre, X.~Chang, and S.~Mehrotra.
\newblock Validation and characterization of dna microarray gene expression
  data distribution and associated moments.
\newblock \emph{BMC bioinformatics}, 11:\penalty0 1--14, 2010.

\bibitem[Wang and Lopes(2023)]{Wang2023boot}
S.~Wang and M.~E. Lopes.
\newblock {A bootstrap method for spectral statistics in high-dimensional
  elliptical models}.
\newblock \emph{Electronic Journal of Statistics}, 17\penalty0 (2):\penalty0
  1848 -- 1892, 2023.

\end{thebibliography}

\appendix

\section{Proofs and supporting lemmas}\label{sec:a}
We first present several supporting lemmas.
\begin{lemma}\label{lem:1}
	Let $\U \in \mR^{p}$ be a random vector which is uniformly distribution on the unit sphere $\mathcal{S}^{p-1}$. For any $p\times p$ deterministic symmetric matrices $\A$, $\B$ and $\C$, we have
	\begin{align}
		&\E \U\trans\A\U=\frac{1}{p}\tr\A,\label{eq:qua1}\\
		&\E\left(\U\trans\A\U\right)\left(\U\trans\B\U\right)=\frac{\left(\tr\A\tr\B+2\tr\A\B\right)}{p(p+2)},\label{eq:qua2}\\
		&\E\left(\U\trans\A\U\right)\left(\U\trans\B\U\right)\left(\U\trans\C\U\right)\nonumber\\
		&\qquad\qquad\qquad=\frac{\left(\tr\A\tr\B\tr\C+2\tr\A\tr\B\C+2\tr\B\tr\A\C+2\tr\C\tr\A\B+8\tr\A\B\C\right)}{p(p+2)(p+4)},\label{eq:qua3}\\
		&\E\left(\U\trans\A\U\right)^4=\frac{\left(\tr^4\A+12\tr^2\A\tr\A^2+12\tr^2\A^2+32\tr\A\tr\A^3+48\tr\A^4\right)}{p(p+2)(p+4)(p+6)}.\label{eq:qua4}
	\end{align}
\end{lemma}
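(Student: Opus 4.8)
The plan is to reduce all four identities to Gaussian moment computations via the stochastic representation of the uniform law on the sphere. Recall that if $\Z\sim N(0,\bI_p)$, then $\U\indist\Z/\|\Z\|_2$, and moreover $\|\Z\|_2$ is independent of $\Z/\|\Z\|_2$, a consequence of the rotational invariance of the standard Gaussian. Writing $\Z=\|\Z\|_2\U$, we get $\Z\trans\A_m\Z=\|\Z\|_2^2\,\U\trans\A_m\U$, so for any symmetric matrices $\A_1,\dots,\A_k$,
\begin{align*}
	\E\prod_{m=1}^{k}\bigl(\Z\trans\A_m\Z\bigr)=\E\Bigl(\|\Z\|_2^{2k}\prod_{m=1}^{k}\U\trans\A_m\U\Bigr)=\E\|\Z\|_2^{2k}\cdot\E\prod_{m=1}^{k}\bigl(\U\trans\A_m\U\bigr).
\end{align*}
Thus each identity follows once we compute the corresponding Gaussian moment on the left and divide by $\E\|\Z\|_2^{2k}$. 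Since $\|\Z\|_2^2\sim\chi^2_p$, we have $\E\|\Z\|_2^{2k}=p(p+2)\cdots(p+2k-2)$, which reproduces exactly the denominators $p$, $p(p+2)$, $p(p+2)(p+4)$, $p(p+2)(p+4)(p+6)$ in \eqref{eq:qua1}--\eqref{eq:qua4} for $k=1,2,3,4$ respectively, where for \eqref{eq:qua4} we take $k=4$ with $\A_1=\dots=\A_4=\A$.

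Next I would evaluate the Gaussian numerators using Isserlis' (Wick's) theorem. Expanding $\Z\trans\A\Z=\sum_{i,j}A_{ij}Z_iZ_j$ and applying $\E[Z_{i_1}\cdots Z_{i_{2r}}]=\sum_{\text{pairings}}\prod\delta_{\cdot\cdot}$, each term of the expansion is indexed by a perfect matching of the $2r$ index slots and evaluates to a product of traces of matrix words determined by how the matching connects the quadratic-form blocks. For $k=1$ this gives $\E\,\Z\trans\A\Z=\tr\A$. For $k=2$ the $3$ matchings give $\tr\A\tr\B+2\tr\A\B$. For $k=3$ the $15$ matchings regroup, according to whether they leave all three blocks separate, join exactly two of them, or join all three in a cycle, into the five trace patterns of \eqref{eq:qua3} with multiplicities $1,2,2,2,8$ (indeed $1+2+2+2+8=15$). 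For \eqref{eq:qua4} the $105$ matchings of eight index slots, all blocks carrying the same matrix $\A$, collapse into the patterns $\tr^4\A$, $\tr^2\A\,\tr\A^2$, $\tr^2\A^2$, $\tr\A\,\tr\A^3$, $\tr\A^4$ with multiplicities $1,12,12,32,48$ (indeed $1+12+12+32+48=105$). Dividing by the chi-squared moments from the first step yields \eqref{eq:qua1}--\eqref{eq:qua4}.

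The only real work is the combinatorial bookkeeping in the last two cases: verifying that the number of perfect matchings producing each trace word equals the stated coefficient. This is routine but needs care, and I would organize it by classifying matchings according to the interconnection pattern of the blocks. For $k=3$ there are three ways to choose the pair of linked blocks, each yielding $2\tr(\cdot)\tr(\cdot\,\cdot)$, together with the two cyclic orientations on three blocks yielding $8\tr\A\B\C$ in total; for the fourth power one counts matchings by the partition of the four blocks they induce together with the cyclic structure within each part. An alternative to the direct count for \eqref{eq:qua4} is to first establish the general four-factor analogue of \eqref{eq:qua3} and then set $\A_1=\dots=\A_4=\A$, but since only diagonal words arise I would simply carry out the direct enumeration. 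One may also cross-check the answers against the cumulants of a Gaussian quadratic form, $\kappa_r(\Z\trans\A\Z)=2^{r-1}(r-1)!\,\tr\A^r$, which must reproduce the same numerators.
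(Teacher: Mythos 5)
Your proposal is correct and follows essentially the same route as the paper: the paper likewise reduces each spherical moment to the corresponding Gaussian quadratic-form moment via the independence of $\|\Z\|_2$ and $\Z/\|\Z\|_2$ (citing Lemma S4.1 of Hu et al.\ 2019 for the ratio identity and Theorem 1 of Bao 2010 for the Gaussian moments), whereas you prove those two ingredients directly by rotational invariance and Isserlis' theorem. The combinatorial counts you report ($1,2,2,2,8$ summing to $15$ and $1,12,12,32,48$ summing to $105$) are the correct ones, and your cumulant cross-check confirms \eqref{eq:qua4}.
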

\begin{proof}[Proof of Lemma \ref{lem:1}]
	\eqref{eq:qua1} and \eqref{eq:qua2} is the direct result of \cite{hu2019high}. By Lemma S4.1 of \cite{hu2019high}, we have
	\begin{align*}
		&\E\left(\U\trans\A\U\right)\left(\U\trans\B\U\right)\left(\U\trans\C\U\right)=\frac{\E(\Z\trans\A\Z)(\Z\trans\B\Z)(\Z\trans\C\Z)}{\E\|\Z\|_2^6},\\
		&\E\left(\U\trans\A\U\right)^4=\frac{\E(\Z\trans\A\Z)^4}{\E\|\Z\|_2^8},
	\end{align*}
	where $\Z\sim N(\bm{0},\bI)$. A detailed result of $\E(\Z\trans\A\Z)(\Z\trans\B\Z)(\Z\trans\C\Z)$ and $\E(\Z\trans\A\Z)^4$ can be found in Theorem 1 of \cite{BAO20101193}, from which we can get \eqref{eq:qua3} and \eqref{eq:qua4}.
\end{proof}
\begin{lemma}\label{lem:2}
	For an centered elliptical distributed variable, 
	\begin{align*}
		\X_0=\xi \bSig^{\frac{1}{2}} \U,
	\end{align*}
	where $\E \xi^2=p$, we have	
	\begin{align*}
		\E \X_0 \trans \X_0=&\tr \bSig;\\
		\var\left(  \X_0 \trans \X_0 \right)=&\E\left( \X_0 \trans \X_0-\tr \bSig\right)^2=2 \theta \tr \bSig^2+(\theta-1)\tr^2\bSig;\\
		\var\left((\X_0\trans\X_0-\tr\bSig)^2\right)=&(\eta_4-4\eta_3-\eta_2^2+8\eta_2-4)\tr^4\bSig+4(3\eta_4-\eta_2^2)\tr^2\bSig^2\nonumber\\
		&+4\left(3\eta_4-6\eta_3-\eta_2^2+4\eta_2\right)\tr^2\bSig\tr\bSig^2+32(\eta_4-\eta_3)\tr\bSig\tr\bSig^3\nonumber\\
     &+48\eta_4\tr\bSig^4;\\
		\var\left((\X_0\trans\X_0-\theta\tr\bSig)^2\right)=&(\eta_4-4\eta_3\eta_2-\eta_2^2+4\eta_2^3)\tr^4\bSig+4(3\eta_4-\eta_2^2)\tr^2\bSig^2\nonumber\\
		&+4\left(3\eta_4-6\eta_3\eta_2+2\eta_2^3+\eta_2^2\right)\tr^2\bSig\tr\bSig^2\nonumber\\
  &+32(\eta_4-\eta_3\eta_2)\tr\bSig\tr\bSig^3+48\eta_4\tr\bSig^4,
	\end{align*}
where $\eta_m=\E\xi^{2m}/\E Y^m$, $m=1,2,\cdots,$ $Y$ follows a chi-squared distribution with $p$ degree of freedom. In particular, $\eta_1=1$ and $\eta_2=\theta$.
\end{lemma}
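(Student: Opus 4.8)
The plan is to reduce all four identities to moments of the independent pair $(\xi^2,\ \U\trans\bSig\U)$. Writing $W\defby\X_0\trans\X_0=\xi^2(\U\trans\bSig\U)$ and using $\xi\perp\U$, we have $\E W^k=\E\xi^{2k}\cdot\E(\U\trans\bSig\U)^k$ for every $k$. First I would assemble the two ingredients. From the definition $\eta_m=\E\xi^{2m}/\E Y^m$ with $Y\sim\chi^2_p$ and $\E Y^m=p(p+2)\cdots(p+2m-2)$, one reads off $\E\xi^2=\eta_1 p$, $\E\xi^4=\eta_2 p(p+2)$, $\E\xi^6=\eta_3 p(p+2)(p+4)$, $\E\xi^8=\eta_4 p(p+2)(p+4)(p+6)$; the identifiability constraint $\E\xi^2=p$ forces $\eta_1=1$, and $\theta=\E\xi^4/\{p(p+2)\}$ gives $\eta_2=\theta$. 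On the other side, Lemma \ref{lem:1} with $\A=\B=\C=\bSig$ yields $\E(\U\trans\bSig\U)^k$, for $k=1,\dots,4$, as exactly the bracketed trace polynomials of \eqref{eq:qua1}--\eqref{eq:qua4} divided by $p(p+2)\cdots(p+2k-2)$.

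Multiplying the two pieces, the factors $p(p+2)\cdots(p+2k-2)$ cancel, so that $\E W=\tr\bSig$, $\E W^2=\eta_2(\tr^2\bSig+2\tr\bSig^2)$, $\E W^3=\eta_3(\tr^3\bSig+6\tr\bSig\tr\bSig^2+8\tr\bSig^3)$, and $\E W^4=\eta_4(\tr^4\bSig+12\tr^2\bSig\tr\bSig^2+12\tr^2\bSig^2+32\tr\bSig\tr\bSig^3+48\tr\bSig^4)$. The first two claims of the lemma then follow at once, since $\var(W)=\E W^2-(\tr\bSig)^2=(\eta_2-1)\tr^2\bSig+2\eta_2\tr\bSig^2$ and $\eta_2=\theta$.

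For the last two claims I would handle them uniformly by fixing a centering constant $a$ (taking $a=\tr\bSig$ for the third identity and $a=\theta\tr\bSig$ for the fourth), expanding $(W-a)^4=W^4-4aW^3+6a^2W^2-4a^3W+a^4$, taking expectations with $\E W=\tr\bSig$, and forming $\var\{(W-a)^2\}=\E(W-a)^4-\{\E(W-a)^2\}^2$ with $\E(W-a)^2=\E W^2-2a\tr\bSig+a^2$. Substituting the closed forms for $\E W^2,\E W^3,\E W^4$ turns the right side into an explicit polynomial in $\eta_2,\eta_3,\eta_4$ and the traces, which I would reorganize along the monomial basis $\{\tr^4\bSig,\ \tr^2\bSig\tr\bSig^2,\ \tr^2\bSig^2,\ \tr\bSig\tr\bSig^3,\ \tr\bSig^4\}$. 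The main, and essentially only, obstacle is this final bookkeeping: checking, for instance, that the $\tr^4\bSig$ coefficient collapses to $\eta_4-4\eta_3-\eta_2^2+8\eta_2-4$ when $a=\tr\bSig$ and to $\eta_4-4\eta_3\eta_2-\eta_2^2+4\eta_2^3$ when $a=\theta\tr\bSig$. It is purely mechanical, and a useful consistency check is that the coefficients of $\tr^2\bSig^2$ and $\tr\bSig^4$ come out equal to $4(3\eta_4-\eta_2^2)$ and $48\eta_4$ in both cases, because the centering perturbs only the terms carrying an explicit factor of $\tr\bSig$.
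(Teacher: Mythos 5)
Your proposal is correct and follows essentially the same route as the paper's proof: factor $\E(\X_0\trans\X_0)^k=\E\xi^{2k}\cdot\E(\U\trans\bSig\U)^k$ by independence, read off $\E(\U\trans\bSig\U)^k$ from Lemma \ref{lem:1}, and expand the centered fourth moments; the coefficients you flag all check out. The paper merely states this reduction and leaves the bookkeeping implicit, so your write-up is simply a more explicit version of the same argument.
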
	
\begin{proof}[Proof of Lemma \ref{lem:2}]
	Noting that $\xi$ and $\U \trans \bSig \U$ are independent, we have
	\begin{align*}
		\E \left(\X_0 \trans \X_0  \right)^k=\E \xi^{2k} \cdot \E \left(\U \trans \bSig \U \right)^k,~k=1,2,3,4.
	\end{align*}
	Recalling 
	\begin{align*}
		\eta_k=\frac{\E \xi^{2k}}{p(p+2)\cdots(p+2k-2)},
	\end{align*}
	and $\eta_1=1,~\eta_2=\theta$, we conclude the results from Lemma \ref{lem:1}.
\end{proof}

Both $T_1$, $T_2$ and $T_3$ are invariant with respect to the location transformation, we assume that $\bmu=\bm{0}$ in the rest of the paper.
\subsection{Proof of Theorem \ref{thm1}}
\begin{proof}
	Consider the Hoeffding decomposition of $T_1$, $T_2$ and $T_3$. Defining the kernel functions 
 \begin{align*}
	&k_1(\x_1,\x_2,\x_3,\x_4)=\frac{1}{24}\sum_{(i,j,k,l)=\pi(1,2,3,4)}\left(\|\x_1-\x_2\|_2^2-\|\x_3-\x_4\|_2^2\right)^2,\\
	&k_2(\x_1,\x_2,\x_3,\x_4)=\frac{1}{24}\sum_{(i,j,k,l)=\pi(1,2,3,4)}\|\x_1-\x_2\|_2^2\|\x_3-\x_4\|_2^2,\\
	&k_3(\x_1,\x_2,\x_3,\x_4)=\frac{1}{24}\sum_{(i,j,k,l)=\pi(1,2,3,4)}\left((\x_1-\x_2)\trans(\x_3-\x_4)\right)^2,
\end{align*}
we have
\begin{align*}
	T_1=\frac{1}{4C_n^4}\sum_{i< j<k<l}k_1(\X_i,\X_j,\X_k,\X_l),\\
	T_2=\frac{1}{4C_n^4}\sum_{i< j<k<l}k_2(\X_i,\X_j,\X_k,\X_l),\\
	T_3=\frac{1}{4C_n^4}\sum_{i< j<k<l}k_3(\X_i,\X_j,\X_k,\X_l).
\end{align*}

 Note that
	\begin{align*}
		3k_1(\x_1,\x_2,\x_3,\x_4)=&\left(\|\x_1-\x_2\|_2^2-\|\x_3-\x_4\|_2^2\right)^2+\left(\|\x_1-\x_3\|_2^2-\|\x_2-\x_4\|_2^2\right)^2\\
		&+\left(\|\x_1-\x_4\|_2^2-\|\x_2-\x_3\|_2^2\right)^2,\\
		3k_2(\x_1,\x_2,\x_3,\x_4)=&\|\x_1-\x_2\|_2^2\|\x_3-\x_4\|_2^2+\|\x_1-\x_3\|_2^2\|\x_2-\x_4\|_2^2\\
		&+\|\x_1-\x_4\|_2^2\|\x_2-\x_3\|_2^2,\\
		3k_3(\x_1,\x_2,\x_3,\x_4)=&\left((\x_1-\x_2)\trans(\x_3-\x_4)\right)^2+\left((\x_1-\x_3)\trans(\x_2-\x_4)\right)^2\\
		&+\left((\x_1-\x_4)\trans(\x_2-\x_3)\right)^2.
	\end{align*}
	The conditional expectations of the centered kernel functions is
	\begin{align*}
		k_{11}(\X_1)=&\E\left(k_{1}(\X_1,\X_2,\X_3,\X_4)|\X_1\right)-\E k_{1}(\X_1,\X_2,\X_3,\X_4)\\
		=&\left((\X_1\trans\X_1-\tr\bSig)^2-\E(\X_1\trans\X_1-\tr\bSig)^2\right)+4(\X_1\trans\bSig\X_1-\tr\bSig^2),\\
		k_{21}(\X_1)=&\E\left(k_{2}(\X_1,\X_2,\X_3,\X_4)|\X_1\right)-\E k_{2}(\X_1,\X_2,\X_3,\X_4)\\
		=&2\tr\bSig\left(\X_1\trans\X_1-\tr\bSig\right),\\
		k_{31}(\X_1)=&\E\left(k_{3}(\X_1,\X_2,\X_3,\X_4)|\X_1\right)-\E k_{3}(\X_1,\X_2,\X_3,\X_4)\\
		=&2\left(\X_1\trans\bSig\X_1-\tr\bSig^2\right).
	\end{align*}
	By Lemma \ref{lem:1}, we have
	\begin{align*}
		&\var\left(k_{21}(\X_1)\right)=4(\theta-1)\tr^4\bSig+8\theta\tr^2\bSig\tr\bSig^2\leq12\theta\tr^4\bSig,\\
		&\var\left(k_{31}(\X_1)\right)=4(\theta-1)\tr^2\bSig^2+8\theta\tr\bSig^4\leq12\theta\tr^2\bSig^2=o\left(\tr^4\bSig\right).
	\end{align*}
	For $k_{11}(\X_1)$, note that
	\begin{align*}
		\var\left(\X_1\trans\bSig\X_1\right)=&(\theta-1)\tr^2\bSig^2+2\theta\tr\bSig^4\leq(3\theta-1)\tr^2\bSig^2=o\left(\tr^4\bSig\right),
	\end{align*}
	and by Lemma \ref{lem:2},
	\begin{align*}
		\var\left((\X_1\trans\X_1-\tr\bSig)^2\right)\leq&\E(\X_1\trans\X_1-\tr\bSig)^4=\E\left((\X_1\trans\X_1-\theta\tr\bSig)+(\theta-1)\tr\bSig\right)^4\\
		\leq&4\E(\X_1\trans\X_1-\theta_2\tr\bSig)^4+4\E(\theta-1)^4\tr^4\bSig\\
		\leq& C\tr^4\bSig.	
	\end{align*}
	Next we bound the variance of $k_i(\X_1,\X_2,\X_3,\X_4)$. By Lemma \ref{lem:1},
	\begin{align*}
		\var\left(k_1(\X_1,\X_2,\X_3,\X_4)\right)\leq& \var\left(\left(\|\X_1-\X_2\|_2^2-\|\X_3-\X_4\|_2^2\right)^2\right)\\
		\leq&\E\left(\|\X_1-\X_2\|_2^2-\|\X_3-\X_4\|_2^2\right)^4\\
		\leq&4\E\|\X_1-\X_2\|_2^8\leq64\E(\X_1\trans\X_1)^4=O\left(\tr^4\bSig\right).
	\end{align*}
	Similarly,
	\begin{align*}
		\var\left(k_2(\X_1,\X_2,\X_3,\X_4)\right)\leq&\var\left(\|\X_1-\X_2\|_2^2\|\X_3-\X_4\|_2^2\right)\\
		\leq&\E\|\X_1-\X_2\|_2^4\|\X_3-\X_4\|_2^4\\
		\leq&16\left(\E(\X_1\trans\X_1)^2\right)^2=O\left(\tr^4\bSig\right),\\
		\var\left(k_3(\X_1,\X_2,\X_3,\X_4)\right)\leq&\var\left(\left((\X_1-\X_2)\trans(\X_3-\X_4)\right)^2\right)\\
		\leq&\E\left((\X_1-\X_2)\trans(\X_3-\X_4)\right)^4\leq16\E(\X_1\trans\X_2)^4\\
		=&O(\tr^2\bSig^2)=o(\tr^4\bSig).
	\end{align*}
	Combining the above terms we can get
	\begin{align*}
		&\var\left(\frac{T_1}{\tr^2\bSig}\right)\leq\frac{4^2}{n}\var\left(\frac{k_{11}(\X_1)}{4\tr^2\bSig}\right)+O(\frac{1}{n^2})\leq O(\frac{1}{n}),\\
		&\var\left(\frac{T_2}{\tr^2\bSig}\right)\leq\frac{4^2}{n}\var\left(\frac{k_{21}(\X_1)}{4\tr^2\bSig}\right)+O(\frac{1}{n^2})\leq O(\frac{1}{n}),\\
		&\var\left(\frac{T_3}{\tr^2\bSig}\right)\leq\frac{4^2}{n}\var\left(\frac{k_{31}(\X_1)}{4\tr^2\bSig}\right)+O\left(\frac{\tr^2\bSig^2}{n^2\tr^4\bSig}\right)\leq O\left(\frac{\tr^2\bSig^2}{n\tr^4\bSig}\right)=o(\frac{1}{n}).
	\end{align*}
	Hence we conclude that 
 \begin{align*}
     \frac{T_1}{\tr^2\bSig}-(\theta-1)\topr 0,~\frac{T_2}{\tr^2\bSig}-1\topr 0,~\frac{T_3}{\tr^2\bSig}\topr 0,
 \end{align*}
and then continuous mapping theorem yields $\hat\theta_{n}-\theta\topr 0.$
\end{proof}
\subsection{Proof of Theorem \ref{thm2}}
\begin{proof}
	Note that
	\begin{align*}
		\hat\theta_{n}-\theta=&\frac{T_1+(1-\theta)T_2-2(1+\theta_2)T_3}{T_2+2T_3}.
	\end{align*}
	The kernel function of the U statistic $T_1+(1-\theta)T_2$ is 
	\begin{align*}
		k_1(\X_1,\X_2,\X_3,\X_4)+(1-\theta)k_2(\X_1,\X_2,\X_3,\X_4),
	\end{align*}
	and following the proof of Theorem \ref{thm1}, the conditional expectation is
	\begin{align*}
		k_{11}(\X_1)+(1-\theta)k_{21}(\X_1)=&\left((\X_1\trans\X_1-\theta\tr\bSig)^2-\E(\X_1\trans\X_1-\theta\tr\bSig)^2\right)\\
  &+4(\X_1\trans\bSig\X_1-\tr\bSig^2).
	\end{align*}
	By Lemma \ref{lem:2}, we have
	\begin{align*}
		&\var\left((\X_1\trans\X_1-\theta\tr\bSig)^2\right)=r_{1}\tr^4\bSig+r_2\tr^2\bSig\tr\bSig^2+r_3\tr^2\bSig^2+r_4\tr\bSig\tr\bSig^3+r_5\tr\bSig^4,\\
		&\var\left(\X_1\trans\bSig\X_1\right)=(\theta-1)\tr^2\bSig^2+2\theta\tr\bSig^4.
	\end{align*}
 where 
 	\begin{align*}
		\left\{
		\begin{array}{llllll}
			r_1=\eta_4-4\eta_3\eta_2-\eta_2^2+4\eta_2^3,\\
			r_2=4\left(3\eta_4-6\eta_3\eta_2+2\eta_2^3+\eta_2^2\right),\\
			r_3=4(3\eta_4-\eta_2^2),\\
			r_4=32(\eta_4-\eta_3\eta_2),\\
			r_5=48\eta_4.
		\end{array}
		\right.	
	\end{align*}
By the fact that
\begin{align*}
    &\E\left(\frac{\xi^4}{p^2}\right)=\var\left(\frac{\xi^2}{p}\right)+1,\\
    &\E\left(\frac{\xi^6}{p^3}\right)=\E\left(\frac{\xi^2}{p}-1\right)^3+3\var\left(\frac{\xi^2}{p}\right)+1,\\
    &\E\left(\frac{\xi^8}{p^4}\right)=\E\left(\frac{\xi^2}{p}-1\right)^4+4\left(\frac{\xi^2}{p}-1\right)^3+6\var\left(\frac{\xi^2}{p}\right)+1,
\end{align*}
we have
\begin{align*}
    r_1=&\frac{p^5}{(p+2)^3(p+4)(p+6)}\left[\frac{(p+2)^2}{p^2}\E\left(\frac{\xi^8}{p^4}\right)-4\frac{(p+2)(p+6)}{p^2}\E\left(\frac{\xi^6}{p^3}\right)\E\left(\frac{\xi^4}{p^2}\right)\right.\\
    &\qquad\left.-\frac{(p+2)(p+4)(p+6)}{p^3}\E^2\left(\frac{\xi^4}{p^2}\right)+4\frac{(p+4)(p+6)}{p^2}\E^3\left(\frac{\xi^4}{p^2}\right)\right]\\
    =&\frac{p^5}{(p+2)^3(p+4)(p+6)}\left\{\frac{(p+2)^2}{p^2}\E\left(\frac{\xi^2}{p}-1\right)^4-\frac{8(p^2-4p+12)}{p^3}\var\left(\frac{\xi^2}{p}\right)\right.\\
    &\left.\qquad-\left[\frac{(p+2)(p+4)(p+6)}{p^3}-\frac{24(p+6)}{p^2}\right]\var^2\left(\frac{\xi^2}{p}\right)+\frac{8(p-6)}{p^3}\right.\\
    &\left.\qquad-4\frac{(p+2)(p+6)}{p^2}\E\left(\frac{\xi^2}{p}-1\right)^3\var\left(\frac{\xi^2}{p}\right)+4\frac{(p+4)(p+6)}{p^2}\var^3\left(\frac{\xi^2}{p}\right)\right\}.
\end{align*}
Under the case (i) of Theorem \ref{thm2},
\begin{align*}
    r_1=&\E\left(\frac{\xi^2}{p}-1\right)^4-\var^2\left(\frac{\xi^2}{p}\right)-\frac{8}{p}\var\left(\frac{\xi^2}{p}\right)+\frac{8}{p^2}+o\left(\frac{1}{p^2}\right) \\
    =&\var\left[\left(\frac{\xi^2}{p}-1\right)^2\right]-\frac{8}{p}\left[\var\left(\frac{\xi^2}{p}\right)-\frac{1}{p}\right]+o\left(\frac{1}{p^2}\right)\\
    =&\frac{2(\tau-2)^2}{p^2}+o\left(\frac{1}{p^2}\right).
\end{align*}
Under the case (ii) of Theorem \ref{thm2},
\begin{align*}
  r_1= & \var\left[\left(\frac{\xi^2}{p}-1\right)^2\right]-4\var\left(\frac{\xi^2}{p}\right)\E\left(\frac{\xi^2}{p}-1\right)^3+4\var^3\left(\frac{\xi^2}{p}\right)\\
  =&\var\left[\left(\frac{\xi^2}{p}-1\right)^2-2\var\left(\frac{\xi^2}{p}\right)\cdot\frac{\xi^2}{p}\right].
\end{align*}
With similar arguments, we have:
\begin{itemize}
    \item[] For case (i), 
    \begin{align*}
        &r_2=8(\tau-2)/p+o(1/p),~r_3=8+o(1),\\
        &r_4=64(\tau-2)/p+o(1/p),~r_5=48+o(1),
    \end{align*}
    \item[] For case (ii),
    \begin{align*}
        &r_2=O(1),~r_3=O(1),r_4=O(1),~r_5=O(1).
    \end{align*}
\end{itemize}
Combining all the pieces, we can obtain
\begin{align*}
  \var\left((\X_1\trans\X_1-\theta\tr\bSig)^2\right)=\left\{
  \begin{array}{ll}
      2\left(\frac{\tau-2}{p}\tr^2\bSig+2\tr\bSig^2\right)^2+o\left(\frac{1}{p^2}\right),  &  \rm{case}~(i),\\
      \var\left[\left(\frac{\xi^2}{p}-1\right)^2-2\var\left(\frac{\xi^2}{p}\right)\cdot\frac{\xi^2}{p}\right]+o(1), & \rm{case}~(ii),
  \end{array}
  \right.   
\end{align*}
and 
\begin{align*}
 \var\left(\X_1\trans\bSig\X_1\right)=o\left[\var\left((\X_1\trans\X_1-\theta\tr\bSig)^2\right)\right].   
\end{align*}
 Thus
\begin{align*}
	\var\left(k_{11}(\X_1)+(1-\theta)k_{21}(\X_1)\right)=\sigma_n^2\tr^4\bSig(1+o(1)).
\end{align*}
Similarly, 
\begin{align*}
		\var\left(k_{31}(\X_1)\right)=4\var\left(\X_1\trans\bSig\X_1\right)=o\left(\sigma_n^2\tr^4\bSig\right).
\end{align*}
Next, writing $\Y_1=\X_1-\X_2$ and $\Y_2=\X_3-\X_4$, and noting that
	\begin{align*}
		\E\Y_1\trans\Y_1=&2\tr\bSig,\\
		\E(\Y_1\trans\Y_1)^2=&2(1+\theta)(\tr^2\bSig+2\tr\bSig^2),
	\end{align*}
	thus we have
	\begin{align*}
		&\var\left(k_1(\X_1,\X_2,\X_3,\X_4)+(1-\theta)k_2(\X_1,\X_2,\X_3,\X_4)\right)\\
		\leq&\var\left(\left(\|\X_1-\X_2\|_2^2-\|\X_3-\X_4\|_2^2\right)^2+(1-\theta)\|\X_1-\X_2\|_2^2\|\X_3-\X_4\|_2^2\right)\\
		=&\var\left(\|\Y_1\|_2^4+\|\Y_2\|_2^4-(1+\theta)\|\Y_1\|_2^2\|\Y_2\|_2^2\right)\\
		=&\var\left(\left(\Y_1\trans\Y_1-(1+\theta)\tr\bSig\right)^2+\left(\Y_2\trans\Y_2-(1+\theta)\tr\bSig\right)^2\right.\\
  &\qquad\left.-(1+\theta)(\Y_1\trans\Y_1-2\tr\bSig)(\Y_2\trans\Y_2-2\tr\bSig)\right)\\
		=&2\var\left(\left(\Y_1\trans\Y_1-(1+\theta)\tr\bSig\right)^2\right)+(1+\theta)^2\left(\E(\Y_1\trans\Y_1-2\tr\bSig)^2\right)^2,
	\end{align*}
	\begin{align*}
		&\var\left(\left(\Y_1\trans\Y_1-(1+\theta)\tr\bSig\right)^2\right)\\
		=&\E	\left(\Y_1\trans\Y_1-(1+\theta)\tr\bSig\right)^4-\left(\E\left(\Y_1\trans\Y_1-(1+\theta)\tr\bSig\right)^2\right)^2\\
		=&\E(\Y_1\trans\Y_1)^4-4(1+\theta)\tr\bSig\E(\Y_1\trans\Y_1)^3+6(1+\theta)^2\tr^2\bSig\E(\Y_1\trans\Y_1)^2\\
		&-4(1+\theta)^3\tr^3\bSig\E(\Y_1\trans\Y_1)+(1+\theta)^4\tr^4\bSig-(1+\theta)^2\left((\theta-1)\tr^2\bSig+4\tr\bSig^2\right)^2\\
		=&\E(\Y_1\trans\Y_1)^4-4(1+\theta)\tr\bSig\E(\Y_1\trans\Y_1)^3+4(1+\theta)^2(2\theta+1)\tr^4\bSig\\
		&+16(1+\theta)^2(\theta+2)\tr^2\bSig\tr\bSig^2-16(1+\theta)^2\tr^2\bSig^2.
	\end{align*}
	Expanding $\E(\Y_1\trans\Y_1)^3$ and $\E(\Y_1\trans\Y_1)^4$ we have
	\begin{align*}
		\E(\Y_1\trans\Y_1)^3=&\E\left(\X_1\trans\bSig\X_1+\X_2\trans\bSig\X_2-2\X_1\trans\bSig\X_2\right)^3\\
		=&2\E(\X_1\trans\bSig\X_1)^3+6\E	(\X_1\trans\bSig\X_1)^2\E(\X_1\trans\bSig\X_1)+24\E(\X_1\trans\bSig\X_1)(\X_1\trans\bSig^2\X_1),\\
		\E(\Y_1\trans\Y_1)^4=&\E\left(\X_1\trans\bSig\X_1+\X_2\trans\bSig\X_2-2\X_1\trans\bSig\X_2\right)^4\\
		=&2\E(\X_1\trans\bSig\X_1)^4+8\E	(\X_1\trans\bSig\X_1)^3\E(\X_1\trans\bSig\X_1)+6\left(\E(\X_1\trans\bSig\X_1)^2\right)^2\\
		&+48\E(\X_1\trans\bSig\X_1)^2(\X_1\trans\bSig^2\X_1)+48\E(\X_1\trans\bSig\X_1)(\X_2\trans\bSig\X_2)(\X_1\trans\bSig\X_2)^2\\
		&+16\E(\X_1\trans\bSig\X_2)^4,
	\end{align*}
	applying Lemma \ref{lem:1} we can obtain
	\begin{align*}
		\E(\Y_1\trans\Y_1)^3=&2\left(\eta_3+3\eta_2\right)\tr^3\bSig+12\left(\eta_3+3\eta_2\right)\tr\bSig\tr\bSig^2+16\left(\eta_3+3\eta_2\right)\tr\bSig^3,\\		\E(\Y_1\trans\Y_1)^4=&2(\eta_4+4\eta_3+3\eta_2^2)\tr^4\bSig+24(\eta_4+4\eta_3+3\eta_2^2)\tr^2\bSig\tr\bSig^2\\		&+24\left(\eta_4+4\eta_3+3\eta_2^2\right)\tr^2\bSig^2+64\left(\eta_4+4\eta_3+3\eta_2^2\right)\tr\bSig\tr\bSig^3\\
  &+96\left(\eta_4+4\eta_3+3\eta_2^2\right)\tr\bSig^4,
	\end{align*}
	hence
	\begin{align*}
		&\var\left(\left(\Y_1\trans\Y_1-(1+\theta_2)\tr\bSig\right)^2\right)\\
		=&\left(2\left(\eta_4-4\eta_3\eta_2-\eta_2^2+4\eta_2^3\right)+4(\eta_2-1)^2\right)\tr^4\bSig\\
		&+\left(8\left(3\eta_4-6\eta_3\eta_2+2\eta_2^3+\eta_2^2\right)+16\left(3(\eta_3-\eta_2)+2(\eta_2-1)^2\right)\right)\tr^2\bSig\tr\bSig^2\\
		&+\left(8\left(3\eta_4-\eta_2^2\right)+16\left(6\eta_3+4\eta_2^2-2\eta_2-1\right)\right)\tr^2\bSig^2\\
		&+\left(64\left(\eta_4-\eta_3\eta_2\right)+192\left(\eta_3-\eta_2\right)\right)\tr\bSig\tr\bSig^3\\
		&+96\left(\eta_4+4\eta_3+3\eta_2^2\right)\tr\bSig^4.
	\end{align*}
	Noting
	\begin{align*}
		(1+\theta)^2&\left(\E(\Y_1\trans\Y_1-2\tr\bSig)^2\right)^2\\
		&=4(1+\eta_2)^2\left[(\eta_2-1)^2\tr^4\bSig+4(\eta_2^2-1)\tr^2\bSig\tr\bSig^2+4(\eta_2+1)^2\tr^2\bSig^2\right],
	\end{align*}
	we have
	\begin{align*}
		&\var\left(k_1(\X_1,\X_2,\X_3,\X_4)+(1-\theta)k_2(\X_1,\X_2,\X_3,\X_4)\right)\\
		\leq&2\var\left((\X_1\trans\X_1-\theta\tr\bSig)^2\right)\\
  &+4\left(\tilde{r}_1\tr^4\bSig+\tilde{r}_2\tr^2\bSig\tr\bSig^2+\tilde{r}_3\tr^2\bSig^2+\tilde{r}_4\tr\bSig\tr\bSig^3+\tilde{r}_5\tr\bSig^4\right),
	\end{align*}
 where
\begin{align*}
\left\{
		\begin{array}{llll}
			\tilde{r}_1=&(\eta_2-1)^2\left((\eta_2+1)^2+1\right),\\
			\tilde{r}_2=&4\left(\left(3(\eta_3-\eta_2)+2(\eta_2-1)^2\right)+(\eta_2+1)^2(\eta_2^2-1)\right),\\
			\tilde{r}_3=&4\left(\left(6\eta_3+4\eta_2^2-2\eta_2-1\right)+(\eta_2+1)^4\right),\\
			\tilde{r}_4=&48\left(\eta_3-\eta_2\right),\\
			\tilde{r}_5=&24\left(4\eta_3+3\eta_2^2\right).
		\end{array}
		\right.    
\end{align*}
 With similar arguments of $r_i$, we have:
 \begin{itemize}
    \item[] for case (i), 
    \begin{align*}
     &\tilde{r}_1=  \frac{5(\tau-2)^2}{p^2}+o\left(\frac{1}{p^2}\right),~ \tilde{r}_2=\frac{56(\tau-2)}{p}+ o\left(\frac{1}{p}\right),~\tilde{r}_3=92+o(1),\\
     &\tilde{r}_4=\frac{96(\tau-2)}{p}+ o\left(\frac{1}{p}\right),~\tilde{r}_5=168+o(1),
    \end{align*}
    and then
    \begin{align*}
      &\var\left(k_1(\X_1,\X_2,\X_3,\X_4)+(1-\theta)k_2(\X_1,\X_2,\X_3,\X_4)\right)\\
      \leq&9\left(\frac{\tau-2}{p}\tr^2\bSig+6\tr\bSig^2\right)\left(\frac{\tau-2}{p}\tr^2\bSig+2\tr\bSig^2\right)+\frac{\varepsilon}{p^2}+o\left(\frac{1}{p^2}\right)\\
      =&O\left(\var\left((\X_1\trans\X_1-\theta\tr\bSig)^2\right)\right);
    \end{align*}
    \item[] for case (ii),
    \begin{align*}
        &\tilde{r}_1=\var^2\left(\frac{\xi^2}{p}\right)\left[\left(\var\left(\frac{\xi^2}{p}\right)+2\right)^2+1\right],\\
        &\tilde{r}_2=O(1),~\tilde{r}_3=O(1),\tilde{r}_4=O(1),~\tilde{r}_5=O(1),
    \end{align*}
    and
    \begin{align*}
        &\var\left(k_1(\X_1,\X_2,\X_3,\X_4)+(1-\theta)k_2(\X_1,\X_2,\X_3,\X_4)\right)\\
        =&O\left(\var\left((\X_1\trans\X_1-\theta\tr\bSig)^2\right)\right).
    \end{align*}
\end{itemize}
	Similarly,
	\begin{align*}
		&\var\left(k_3(\X_1,\X_2,\X_3,\X_4)\right)\\
  \leq&\var\left(\left((\X_1-\X_2)\trans(\X_3-\X_4)\right)^2\right)\\
		=&\var\left((\Y_1\trans\Y_2)^2\right)\leq6(1+\theta)\E\left(\Y_2\trans\bSig\Y_2\right)^2\\
		=&12(1+\theta)^2(\tr^2\bSig^2+\tr\bSig^4)\\
  =&O\left(\var\left(k_1(\X_1,\X_2,\X_3,\X_4)+(1-\theta)k_2(\X_1,\X_2,\X_3,\X_4)\right)\right).
	\end{align*}
	Combining all the pieces we can obtain
	\begin{align*}
		\var\left(\frac{T_1+(1-\theta)T_2}{\tr^2\bSig}\right)=&\frac{4^2}{n}\var\left(\frac{k_{11}(\X_1)+(1-\theta)k_{21}(\X_1)}{4\tr^2\bSig}\right)(1+o(1))\\
  =&\frac{\sigma_n^2}{n}(1+o(1)),\\
		\var\left(\frac{T_3}{\tr^2\bSig}\right)=&\frac{1}{n}\var\left(\frac{k_{31}(\X_1)}{\tr^2\bSig}\right)+O\left(\frac{\sigma_n^2}{n^2}\right)=o\left(\frac{\sigma_n^2}{n}\right),
	\end{align*}
	thus we conclude that
	\begin{align*}
		&\frac{\left(T_1+(1-\theta)T_2\right)-\E\left(T_1+(1-\theta)T_2\right)}{\tr^2\bSig}\\
		=&\frac{1}{n}\sum_{i=1}^{n}\frac{k_{11}(\X_i)+(1-\theta)k_{21}(\X_i)-\E\left(k_{11}(\X_i)+(1-\theta)k_{21}(\X_i)\right)}{\tr^2\bSig}+o_p\left(\frac{\sigma_n}{\sqrt{n}}\right),\\
		&\frac{T_3-\E T_3}{\tr^2\bSig}=o_p\left(\frac{\sigma_n}{\sqrt{n}}\right).	
	\end{align*}
	By classical CLT and Slutsky's theorem,
	\begin{align*}
		\frac{\sqrt{n}}{\sigma_n}\left(\frac{T_1+(1-\theta)T_2-2(1+\theta)T_3}{\tr^2\bSig}\right)\tod N(0,1).
	\end{align*} 
	Noting
	\begin{align*}
		\frac{T_2+2T_3}{\tr^2\bSig}\topr 1,
	\end{align*}	
	we finally conclude that
	\begin{align*}
		\frac{\sqrt{n}}{\sigma_n}(\theta_{n}-\theta)=&\frac{\sqrt{n}}{\sigma_n}\left(\frac{T_1+(1-\theta)T_2-2(1+\theta)T_3}{T_2+2T_3}\right)\\
		=&\frac{\sqrt{n}}{\sigma_n}\left(\frac{T_1+(1-\theta)T_2-2(1+\theta)T_3}{\tr^2\bSig}\right)\frac{\tr^2\bSig}{T_2+2T_3}\tod N(0,1).
	\end{align*}
\end{proof}	

\end{document}